\documentclass[oneside,aip,amsmath,amssymb,reprint]{amsart}
\usepackage{mathptmx}
\usepackage[T1]{fontenc}
\usepackage[utf8]{inputenc}
\usepackage{geometry}
\geometry{verbose,tmargin=1in,bmargin=1in,lmargin=1in,rmargin=1in}
\usepackage{amsthm}
\usepackage{amssymb}
\usepackage{stmaryrd}
\usepackage{hyperref}

\makeatletter
\numberwithin{equation}{section}
\numberwithin{figure}{section}
\theoremstyle{plain}
\newtheorem{thm}{Theorem}
\theoremstyle{plain}
\newtheorem{prop}[thm]{Proposition}
\theoremstyle{plain}
\newtheorem{lem}[thm]{Lemma}
\theoremstyle{plain}
\newtheorem{cor}[thm]{Corollary}

\theoremstyle{definition}
\newtheorem{defn}[thm]{Definition}
\newtheorem{rem}[thm]{Remark}


\newcommand{\Cdub}{\left[ \mathbb{C} \right]}
\newcommand{\ree}{\operatorname{Re}}
\newcommand{\imm}{\operatorname{Im}}
\newcommand{\diag}{\operatorname{diag}}
\newcommand{\sgn}{\operatorname{sgn}}
\newcommand{\const}{\operatorname{const.}}
\newcommand{\lr}[1]{\left\llbracket #1 \right\rrbracket}

\makeatother

\begin{document}
\title{On Isomonodromic Deformation of Massive Ising Spinors}
\author{S. C. Park}
\email{scpark@kias.re.kr}
\address{School of Mathematics, Korea Institute for Advanced Study\\
 85 Hoegi-ro, Dongdaemun-gu, Seoul 02455, Republic of Korea }
\begin{abstract}
In this short note, we give a self-contained derivation of the formula
for the $2$-point full-plane Ising spin correlation function under
massive scaling limit in terms of a third Painlevé transcendant.
This formula, first derived in a celebrated work of Wu, McCoy, Tracy,
and Barouch, was subsequently reformulated in terms of the theory
of isomonodromic deformation by Sato, Miwa, and Jimbo. In view of
recent developments in the discrete analysis which have enabled, in
particular, a convergence proof of spin correlation functions on isoradial
lattice, we give a concise and rigorous account of the continuous
theory in the same framework yielding this iconic result. 
\end{abstract}

\maketitle

\section{Introduction}

The Ising model is a lattice model of ferromagnetism renowned for the simplicity in its definition and the mathematical sophistication in its analysis. After its introduction and analysis in one dimension \cite{lenz, ising}, the model found its perhaps the most prolific setting in the self-dual planar lattice $\mathbb{Z}^2$, in part thanks to the model's inherent duality \cite{kramers-wannier} and Onsager's exact computation of the free energy per site \cite{onsager} which demonstrated rigorously that the model undergoes a phase transition at a non-trivial interation strength. Many other formulae regarding fundamental properties of the model soon followed, notably expectations of the spin (\emph{spontaneous magnetization}) and the product of two neighboring spins (\emph{energy density}) \cite{kaufman-onsager-i, yang} (see \cite{mccoy-wu} for a comprehensive overview), and the model went on to be well known for its amenability to a wide variety of analytic methods.

A natural way of studying the large-scale behavior of lattice models is through {scaling limit}, where the lattice scale (mesh size) is taken to zero in fixed space. The critical Ising model in two dimensions was among models conjectured to show emergent \emph{conformal invariance} \cite{BPZ, cardy-i} in scaling limit, whose rigorous proof remained open for decades. However for the \emph{massive} scaling limit, where interaction strength is scaled towards criticality as a function of mesh size, Wu, McCoy, Tracy, and Barouch (WMTB) \cite{wmtb} found already in 1976 a remarkable formula for spin-spin correlations in scaling limit (the 2-point \emph{scaling functions}) in the full-plane, in terms of a third Painlev\'e transcendent and only dependent on the distance between spins (that is, rotationally invariant). Soon after, there was yet another impressive development, with Sato, Miwa, and Jimbo (SMJ) \cite{sato-miwa-jimbo, sato-miwa-jimbo-ii, miwa-jimbo} deriving the formula in terms of \emph{isomonodromic deformation} and generalizing it to the $n$-point case (see also \cite{kako80, patr, palmer}). By comparison, the critical 2-point scaling function was given a rigorous proof only in 2012 by Pinson \cite{pinson}.

Recently, study of the scaling limit of the Ising model has seen a breakthrough with Smirnov's definition of \emph{s-holomorphicity} and corresponding \emph{Riemann-Hilbert boundary value problems} \cite{smirnov-i, smirnov-ii}, which builds on a complex analytic interpretation of discrete relations for \emph{fermion} correlations (e.g. \cite{kadanoff-ceva, perk, mercat}). This approach has particularly proved to be suitable for analysis of the model on generic domains in the complex plane and has facilitated e.g. convergence of critical correlations and interfaces to conformally invariant limits (\cite{hosm2013, chelkak-hongler, chi21, chelkak-duminil-copin-hongler-kemppainen-smirnov, free-interface}, to name a select few). In particular, establishing and characterizing the scaling limit in the full-plane is now an easy corollary of the bounded domain convergence \cite{chelkak-hongler} (see also \cite{chelkak-ems, zigzag}, where the diagonal spin-spin scaling behavior computed in \cite{wu} was re-derived through discrete complex analytic methods).

In addition, this discrete complex analytic approach was noted from early on for its suitability beyond the critical square lattice setup. We are mainly interested in two possible directions of generalization: the discrete holomorphic and harmonic interpretation of critical discrete relations is naturally perturbed to massive counterparts \cite{bedc, hkz, massive-fk-square}, as well as to \emph{isoradial} lattices, where each face is circumscribed by a circle of a fixed radius \cite{chsm2012, dlm}. Exploiting them, this author proved convergence of massive spin correlations on bounded simply connected domains discretized by a square lattice in his Ph.D. thesis \cite{par19} (parts of which we incorporate in this article), which was generalized into the isoradial setup (along with the critical case) by Chelkak, Izyurov, and Mahfouf \cite{cim21}. A crucial element throughout has been developing strategies to work with the particular Riemann-Hilbert boundary condition on general boundaries, regularity assumptions for which have now been largely dropped \cite{par21, cpw}. The pursuit of \emph{universality} across lattice settings has also been recently broadened far beyond the isoradial class by Chelkak's \emph{s-embeddings} \cite{Che18, s-emb}.

We aim here to give an account of the isomonodromic deformation argument given the full isoradial generality of the discrete complex analytic approach. As such, we do not claim originality for this {core mathematical process}, which takes place in the continuum; indeed, the novelties in \cite{par19, cim21} regarding the full-plane model are mainly grounded in their discrete formalism, handled through s-holomorphic discrete spinors converging to continuum objects, the $\mathbb{R}$-linear spinors $f_j$ (Definition \ref{defn:spinor}). While not entirely trivial (see also \cite{cpw}), one can identify them with $\mathbb{C}$-linear objects studied in \cite{sato-miwa-jimbo, kako80, patr}, and follow their analysis. It should also be mentioned that analysis of this particular solution of the Painlev\'e III equation is a substantial problem in its own right, see \cite{mctw} and e.g. \cite[Chapter 15]{fokas}.

Our interest in writing this note primarily stems from the following grounds. First, we provide an alternate self-contained pathway to the proof of $n$-point spin correlation convergence on full-plane in massive scaling limit and isomonodromic deformation: any reader could theoretically refer to this work combined with \cite{cim21} and obtain a complete picture of the justification of those results, which now also hold for isoradial lattices with uniformly non-degenerate angles. In our $\mathbb{R}$-linear formalism, spinors are simply generalizations of ordinary holomorphic functions, which allows us to leverage the theory of generalized analytic functions \cite{bers, vek} for a number of purposes, e.g. to justify formal power series expansions and regularity estimates. Many of these arguments may be straightforwardly applied to bounded domain objects to study their deformations, although (possibly non-explicit) boundary correction terms would be added. Second, we highlight that there are technical improvements to reap from the discrete innovations. Scaling limit process in \cite[IV, Section 4.5]{sato-miwa-jimbo-ii},  \cite[Section 4]{patr81} assumes \emph{exceptional sets}, prohibiting coincidence of certain coordinates. In particular, rotational invariance for $n>2$ case \cite[Theorem 6.6]{patr} contains a similar exception. The scaling functions being the universal limit on generic isoradial lattices, this restriction may now be naturally removed. We also feel apt to mention here the important recent breakthrough on rotational invariance of Fortuin-Kasteleyn percolation models ($1\leq q \leq 4$) based on an entirely different strategy \cite{rot-fk}.

In the remainder of this section, we will state the concrete results we obtain and recall how the discrete model relates to the continuum spinors $f_j$. The next section presents the core continuum argument and, might best be read before the first as an independent piece of complex analysis regarding hypothetical spinors $f_j$ (whose existence is proved in Proposition \ref{prop:phys}). The Appendix contains derivations of necessary estimates using generalized analytic (Bers-Vekua) function theory.

\subsection{Summary of Results}

We first need to recall the definition of the model on isoradial lattice, for which we will generally align with and refer to \cite[Section 2.1]{cim21}.

A planar lattice $\Lambda$ is \emph{isoradial} if every face may be circumscribed by a circle of identical radius $\delta>0$. Its dual $\Lambda^*$ formed by vertices placed on the centers of circumscribing circles is also $\delta$-isoradial, and there is a rhombus $v_0u_0v_1u_1$ associated to every edge $e$ formed by two pairs of incident primal ($v_{0,1}$) and dual vertices ($u_{0,1}$): denote the half-angle at the primal vertices by $\bar\theta_e\in(0,\pi/2)$ (see \cite[Figure 2]{cim21}), which is assumed to be \emph{uniformly bounded} away from the two ends of the interval $0,\pi/2$. In other words, data given by $\Lambda$ is equivalent to a \emph{rhombus tiling} of $\mathbb C$.

Given a finite isoradial graph $G \subset \Lambda$, the \emph{Ising model with Z-invariant weights} \cite{bax78, bax86} on $G$ with \emph{nome} $q \in \mathbb{R}$ (recall the corresponding \emph{elliptic modulus} $k$, \emph{quarter period} $K(k)$, and Jacobi's elliptic function $\operatorname{sn}(\cdot | k)$ \cite[Section 22.2]{dlmf}) is defined as the probability measure $\mathbb{P}_{G}$ on assignments $\sigma$ of $\pm 1$ spins on the \textbf{faces} of $G$ which is given by
\begin{equation*}
\mathbb{P}_{G} [\sigma] \propto \prod_{e \in L(\sigma)} \tan\frac{\hat\theta_e}{2};\; \sin \hat\theta_e := \operatorname{sn}\left(\frac{2K(k)}{\pi} \bar\theta_e |k \right),
\end{equation*}
where the product is over the \emph{low-temperature expansion} representation $L(\sigma)$, i.e. the set of edges across which spins differ (equivalently, $\sigma_{u_0}\sigma_{u_1} = -1$). To be precise, this is the definition of the model with \emph{free boundary condition}, and we may impose $+$ boundary condition by prescribing $\sigma_u = 1$ for faces $u$ in $\Lambda \setminus G$ and taking into account interactions on the boundary of $G$. We will write $\mathbb{P}^{\mathtt{f}}_{G},\mathbb{P}^{+}_{G}$ for disambiguation.

In \emph{massive scaling limit}, we consider the model (notated $\mathbb{P}_{\Lambda}^{(m),+}$, etc.) on progressively finer lattices $\Lambda$ with mesh sizes $\delta \downarrow 0$ with rescaled nome $q=m\delta/2$ for a fixed \emph{mass} $m\in \mathbb R$. Concretely speaking, we assume that $\Lambda=\Lambda_\delta$ is chosen each $\delta$ and consider spins $\sigma_a$ for points $a \in \mathbb{C}$ on a face of $\Lambda$ containing or adjacent to $a$. In any case, the convergence is uniform across isoradial lattices having the same uniform angle bound. For the familiar homogeneous ($J\equiv 1$) model on the square lattice $\Lambda_\delta = \delta \mathbb Z^2$, this scaling of $q$ is equivalent \cite[(2.4)]{cim21} (to first order) to scaling the inverse temperature $\beta=\beta_c-m\delta/2$, with $\beta_c = \frac{1}{2} \ln (1+\sqrt{2})$. In the following, we will exclusively describe the model using parameters $q, m$, in order to repurpose the variables $k$ and $\beta$.

We will now present the main theorem statement. We customarily consider subcritical ($q<0$) and supercritical ($q>0$) models respectively on $\Lambda$ and $\Lambda^*$ (that is, on \emph{vertices} of $\Lambda$), and fix $m<0$ from here now on, because of conventions from the scaling limit setup, but recall that the primal and dual lattices are both isoradial and on an equal footing. The first part is an adaptation of \cite[Theorem 1.3]{cim21} for the full-plane via infinite volume limit; note that similar limit has to be taken in discrete just to define the correlation on the infinite lattice. Both essentially depend on an analogous decorrelation argument which we elaborate in the proof and the next subsection. The second part refers to our reproduction of the SMJ system of equations.

\begin{thm}\label{thm:main}
Let $n\geq 2,m<0$. There are full-plane \emph{scaling functions} $\left< \sigma_{a_1}\sigma_{a_2}\cdots \sigma_{a_n}\right>^{(m),+}, \left< \sigma_{a_1}\sigma_{a_2}\cdots \sigma_{a_n}\right>^{(-m),\mathtt{f}}$ to which the Z-invariant Ising spin correlations converge in massive scaling limit: put $\mathcal{C} = 2^{1/6}e^{3\zeta'(-1)/2}$, then 
\begin{align*}
\delta^{-n/8} \mathbb{E}^{(m),+}_{\Lambda} \left[ \sigma_{a_1}\sigma_{a_2}\cdots \sigma_{a_n}\right] &\to \mathcal{C}^n\left< \sigma_{a_1}\sigma_{a_2}\cdots \sigma_{a_n}\right>^{(m),+};\\
\delta^{-n/8} \mathbb{E}^{(-m),\mathtt{f}}_{\Lambda^*} \left[ \sigma_{a_1}\sigma_{a_2}\cdots \sigma_{a_n}\right] &\to \mathcal{C}^n \left< \sigma_{a_1}\sigma_{a_2}\cdots \sigma_{a_n}\right>^{(-m),\mathtt{f}}.
\end{align*}
These scaling functions are smooth and their logarithmic derivatives are solutions of a closed system of differential equations in $a_1,\ldots,a_n \in \mathbb C$. 
\end{thm}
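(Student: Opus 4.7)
The plan is to proceed in two halves corresponding to the two assertions of the theorem. First I would establish full-plane convergence from the bounded-domain convergence of \cite{cim21} via an infinite-volume limit combined with a decorrelation argument. Fix an inflating sequence of finite isoradial sub-graphs $G_R\subset\Lambda$ containing $a_1,\ldots,a_n$ well inside; the bounded-domain result supplies the limits $\delta^{-n/8}\mathbb{E}^{(m),+}_{G_R}[\sigma_{a_1}\cdots\sigma_{a_n}]\to \mathcal{C}^n\langle \sigma_{a_1}\cdots \sigma_{a_n}\rangle^{(m),+}_{G_R^\circ}$ for continuum bounded-domain correlators. It then remains to show that both discrete and continuum correlators form Cauchy sequences in $R$. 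For the discrete side this follows from GKS/monotonicity-in-domain together with exponential decay of the two-point function in the subcritical ($q<0$, i.e.\ $m<0$ with $+$ bc) regime, with decay rate uniformly positive in $\delta$ thanks to the uniform angle bound. In the continuum, the corresponding Cauchy property is read off directly from the integral representation of the spinors $f_j$ and their massive Bers--Vekua decay estimates collected in the Appendix. The dual/free statement follows identically by Kramers--Wannier duality, exchanging $m\leftrightarrow -m$, $+\leftrightarrow \mathtt{f}$, and $\Lambda\leftrightarrow\Lambda^*$.

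Next I would obtain smoothness of the limiting correlators and the closed differential system; this is the purely continuum part announced for Section 2. Taking for granted the existence, uniqueness and regularity statements of Proposition \ref{prop:phys} for the $\mathbb{R}$-linear massive-holomorphic branching spinors $f_j$ with prescribed $(z-a_j)^{-1/2}$ singularity at $a_j$ and branch behavior at the other $a_k$, the scaling functions are defined through a logarithmic-derivative identity
\[
\partial_{a_j}\log\langle \sigma_{a_1}\cdots\sigma_{a_n}\rangle \;=\; \mathcal{A}_j(a_1,\ldots,a_n),
\]
where $\mathcal{A}_j$ is extracted from the subleading coefficient of $f_j$ in its local expansion at $a_j$. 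Smoothness in the positions is inherited from smooth dependence of $f_j$ on its branch data: one differentiates the Bers--Vekua integral equation characterizing $f_j$ with respect to each $a_k$ and applies the a priori continuity and regularity bounds from the Appendix to legitimize termwise differentiation.

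The closed system of PDEs is then produced by an isomonodromic deformation argument in the spirit of Sato--Miwa--Jimbo. The key point is that, as the branch points move, the discrete monodromy/branching data carried by $\{f_1,\ldots,f_n\}$ are preserved. Differentiating $f_j$ with respect to each $a_k$ and expanding the result in the spinor basis via residue identities at the branch points yields bilinear relations among the local expansion coefficients; combined with the massive Cauchy--Riemann equation $\bar\partial f_j = -\tfrac{m}{2}\overline{f_j}$ they close into a first-order nonlinear system in the $\mathcal{A}_j$'s and auxiliary coefficients, which reproduces the SMJ system and, in the $n=2$ rotationally invariant case, the Painlev\'e III equation of Wu--McCoy--Tracy--Barouch.

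The main obstacle I expect is the isomonodromic step itself: one must faithfully translate between the $\mathbb{R}$-linear formalism natural to the discrete s-holomorphic side and the $\mathbb{C}$-linear framework in which the SMJ monodromy-preservation argument is classically written, and verify that differentiation in the $a_j$'s genuinely commutes with the asymptotic normalizations uniquely characterizing $f_j$. A secondary difficulty is removing the exceptional-set restrictions present in earlier treatments when branch points approach one another; here the uniform Bers--Vekua estimates together with the isoradial non-degeneracy should allow the coefficients $\mathcal{A}_j$ to remain smooth across such diagonals, so that the system extends as a genuine closed PDE system on the full configuration space.
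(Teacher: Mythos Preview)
Your high-level two-part structure matches the paper, but several of the specific mechanisms you propose differ from those actually used, and one step has a genuine gap.

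For the infinite-volume limit, the paper does not invoke exponential decay of the two-point function. It uses the RSW-type estimate \eqref{eq:rsw}, which directly sandwiches the correlation in any large domain between that in a fixed large ball and $(1-\epsilon)$ times it. This single estimate handles both the discrete and continuum Cauchy property at once, so there is no separate spinor-based argument on the continuum side. Your route via GKS plus uniform exponential decay could presumably be made to work, but it is not what is done here.

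For the supercritical free case, ``follows identically by Kramers--Wannier duality'' is too quick: duality sends spins to disorders, not spins to spins. The paper obtains the free-boundary convergence through the Pfaffian identity \eqref{eq:pf} and Proposition~\ref{prop:phys}, which expresses the ratio of dual to primal scaling functions as $|\operatorname{Pf}[\ree\mathcal{B}]|$.

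For differentiability of $f_j$ in the positions $a_k$, the paper does not differentiate a Bers--Vekua integral equation. It runs a compactness-plus-uniqueness argument (Lemma~\ref{lem:f1}, Proposition~\ref{prop:isom}): rescaled increments $h^{-1}f_j^h$ are shown to be $L^2$-bounded, hence precompact by \eqref{eq:mvt}, and every subsequential limit is pinned down by Lemma~\ref{lem:unique}.

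The most substantive gap is in \emph{closing} the system. Expanding $\partial_{\mathbf h}f_k$ in the basis $\{f_{k'},\partial_x f_{k'},\partial_y f_{k'}\}$ and matching coefficients produces an evolution equation for $[\mathcal A]$ that still contains the next-order coefficients $[\mathcal D]$. The paper eliminates $[\mathcal D]$ by exploiting the rotational covariance $\partial_\phi\mathcal A=-i\mathcal A$, $\partial_\phi\mathcal B=0$ of the spinors: plugging the rotational deformation $v=ia$ into the general equation yields \eqref{eq:abdiag}, which determines the commutator $\bigl[[\diag a],[\mathcal D][i\mathcal B]^{-1}\bigr]$ and thereby the $\mathcal D$-contribution for arbitrary $v$. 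Without this rotational trick the system does not close, so you should name it explicitly rather than leave it inside ``bilinear relations\ldots close into a first-order nonlinear system.''

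Finally, the exceptional sets in the earlier literature concern coincidences of \emph{coordinates} (e.g.\ equal real parts), not collisions of branch points. The paper removes them not through continuum Bers--Vekua estimates but by noting that the scaling function is the universal limit on arbitrarily rotated isoradial lattices, whence rotational invariance (and the removal of the restriction) follows from the discrete convergence itself.
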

\begin{proof}
\cite[Theorem 1.3]{cim21} proves the analog 
\begin{equation*}
\delta^{-n/8} \mathbb{E}^{(m),+}_{\Omega^\delta} \left[ \sigma_{a_1}\sigma_{a_2}\cdots \sigma_{a_n}\right] \to \mathcal{C}^n\left< \sigma_{a_1}\sigma_{a_2}\cdots \sigma_{a_n}\right>^{(m),+}_\Omega,
\end{equation*}
where $\Omega$ is a smooth, bounded simply connected domain in $\mathbb{C}$. Given \eqref{eq:pf} and Proposition \ref{prop:phys}, it is easy to see that similar convergence holds for the supercritical correlation with free boundary condition. Then \eqref{eq:rsw} applied to both sides, with $\epsilon$ taken arbitrarily small, gives well-definedness of the infinite volume limit of the correlation, the full-plane scaling function defined by the monotonically decreasing limit of the right hand side as $\Omega \uparrow \mathbb{C}$, and then finally convergence of full-plane correlations.

In Section \ref{sec:2} we derive a closed system of differential equations involving smooth functions $\mathcal{A}_{jk}, \mathcal{B}_{jk}$ in positions $a_1,\ldots,a_n$: see in particular \eqref{eq:Ah}, \eqref{eq:Bh}. These fully characterize the logarithmic derivatives of both scaling functions given Proposition \ref{prop:phys}.
\end{proof}

From the main theorem we derive the following two important consequences.

\begin{cor}
The scaling functions $\mathcal{C}^n\left< \sigma_{a_1}\sigma_{a_2}\cdots \sigma_{a_n}\right>^{(m),+}, \mathcal{C}^n\left< \sigma_{a_1}\sigma_{a_2}\cdots \sigma_{a_n}\right>^{(-m),\mathtt{f}}$ are rotationally invariant.
\end{cor}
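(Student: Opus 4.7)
The plan is to exploit the universality of the scaling limit across the class of isoradial lattices with a common uniform angle bound, which is already built into the convergence statement of Theorem \ref{thm:main} (via \cite[Theorem 1.3]{cim21}). The point is that rotating the underlying lattice is the same as rotating the marked points, so rotational invariance of the limit follows from the limit not depending on the chosen lattice sequence.

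First, I would fix a rotation angle $\alpha\in\mathbb{R}$ and observe that if $\Lambda_\delta$ is an isoradial lattice with rhombus half-angles $\bar\theta_e$ uniformly bounded away from $0$ and $\pi/2$, then the rotated lattice $e^{-i\alpha}\Lambda_\delta$ is isoradial with exactly the same collection of half-angles, hence lies in the same uniform class. Moreover, the Z-invariant weights are functions of the $\bar\theta_e$ only (through $\hat\theta_e$, via the elliptic $\operatorname{sn}$ at nome $q=m\delta/2$), and so the Ising measure transports equivariantly: the law of $(\sigma_{e^{i\alpha}a_1},\ldots,\sigma_{e^{i\alpha}a_n})$ under $\mathbb{P}^{(m),+}_{\Lambda_\delta}$ equals the law of $(\sigma_{a_1},\ldots,\sigma_{a_n})$ under $\mathbb{P}^{(m),+}_{e^{-i\alpha}\Lambda_\delta}$.

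Next I would pass to the scaling limit along a sequence $\delta\downarrow 0$. The left-hand side, rescaled by $\delta^{-n/8}$, tends to $\mathcal{C}^n\langle \sigma_{e^{i\alpha}a_1}\cdots\sigma_{e^{i\alpha}a_n}\rangle^{(m),+}$ by Theorem \ref{thm:main} applied to $\Lambda_\delta$; the right-hand side tends to $\mathcal{C}^n\langle \sigma_{a_1}\cdots\sigma_{a_n}\rangle^{(m),+}$ by Theorem \ref{thm:main} applied to $e^{-i\alpha}\Lambda_\delta$, which is permitted precisely because the full-plane scaling function produced by the argument of Theorem \ref{thm:main} is the same for every admissible isoradial sequence. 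Equating the two limits gives the desired rotational invariance; the same argument applies verbatim to the supercritical free-boundary scaling function on $\Lambda^*$.

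The only subtle point, which I would make explicit, is that the conclusion of Theorem \ref{thm:main} really produces a single scaling function shared by all isoradial sequences with the uniform angle bound, rather than a family indexed by sequences. This is the reason the convergence in \cite[Theorem 1.3]{cim21} is stated uniformly in the lattice, and it is inherited by the monotone full-plane limit $\Omega\uparrow\mathbb{C}$ used in the proof of Theorem \ref{thm:main} (because the bounded-domain scaling functions $\langle\,\cdot\,\rangle_\Omega^{(m),+}$ themselves do not know about the lattice). Once this universality is recorded, rotational invariance is immediate and requires no analysis of the SMJ differential system of Section \ref{sec:2}; indeed, it provides an independent check that the $\mathcal{A}_{jk},\mathcal{B}_{jk}$ appearing there have the rotational covariance one expects.
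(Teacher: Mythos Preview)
Your proposal is correct and matches the paper's primary argument: the paper likewise observes that rotational invariance is a natural consequence of the convergence in Theorem~\ref{thm:main} holding uniformly across general isoradial lattices (since rotating a lattice yields another admissible isoradial lattice). The paper additionally mentions an alternative `continuum' proof via \eqref{eq:logc} and the identity $\partial_\phi \mathcal{A} = -i\mathcal{A}$ from Section~\ref{subsec:deform}, which works even if one only assumes convergence on the square lattice, but your universality argument is the main route given.
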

\begin{proof}
As mentioned in the introduction, this is a natural consequence of the fact the convergence in Theorem \ref{thm:main} holds for general collections of increasingly finer isoradial lattices. Alternatively, a `continuum' proof (which works, e.g. only assuming convergence on square lattice) in the vein of \cite[Theorem 6.6]{patr} and \cite[Theorem 6.5.5]{palmer} may be given by \eqref{eq:logc} and noticing that $\mathcal{A}_{k,k}{a_k}$ is invariant under rotation (using $\partial_\phi \mathcal{A} = -i\mathcal{A}$, see Section \ref{subsec:deform}).
\end{proof}

\begin{cor}[\cite{wmtb, kako80, patr}]
The scaling functions $\mathcal{C}^2\left< \sigma_{a_1}\sigma_{a_2}\right>^{(m),+}, \mathcal{C}^2\left< \sigma_{a_1}\sigma_{a_2}\right>^{(-m),\mathtt{f}}$ for $n=2$ may be expressed in terms of a Painlev\'e III transcendent,
\begin{align*}
\mathcal{C}^2\left< \sigma_{a_1}\sigma_{a_2}\right>^{(m),+} &= (8|m|)^{1/4}\cosh h_0(|m|r) \exp \int_{\infty}^{|m|r} \frac{ (h'_0(r))^2}{4}-4 \sinh^2h_0(r) dr\\
\mathcal{C}^2\left< \sigma_{a_1}\sigma_{a_2}\right>^{(-m),\mathtt{f}} &=(8|m|)^{1/4}\sinh h_0(|m|r) \exp \int_{\infty}^{|m|r} \frac{ (h'_0(r))^2}{4}-4 \sinh^2h_0(r) dr dr,
\end{align*}
where $\eta_0=\exp(-2h_0)$ satisfies the Painlev\'e III equation $\eta_0'' = \eta_0^{-1}(\eta'_0)^2 - r^{-1}\eta'_0+ \eta_0^3 - \eta_0^{-1}$ \cite[(2.36)]{wmtb} and has the asymptotic $\eta_0(\theta) \sim 1- \frac{ 2}{\pi}K_0(2\theta)$ as $\theta \to \infty$, where $K_\nu$ is the modified Beseel function of the second kind. This asymptotic fixes $\eta_0$ within the one-parameter family of solutions constructed in \cite{mctw}.
\end{cor}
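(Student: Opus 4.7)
The plan is to specialize the SMJ-type system derived in Section \ref{sec:2} (equations \eqref{eq:Ah} and \eqref{eq:Bh}) to $n=2$, where by the preceding corollary and translation invariance the scaling functions depend on the single real variable $r = |a_1 - a_2|$. With only two points, the coefficients $\mathcal A_{jk}, \mathcal B_{jk}$ carry a single scalar degree of freedom up to an explicit phase: using the rotation relation $\partial_\phi \mathcal A = -i \mathcal A$, I would parametrize the off-diagonal $\mathcal A_{12}(r), \mathcal B_{12}(r)$ and the diagonal $\mathcal A_{jj}, \mathcal B_{jj}$ by real functions of the rescaled variable $\theta = |m| r$, and check that \eqref{eq:Ah}, \eqref{eq:Bh} collapse to a small closed ODE system in those scalars. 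Setting $\eta_0(\theta) = \exp(-2 h_0(\theta))$ and performing the standard algebraic reduction as in \cite{sato-miwa-jimbo-ii, palmer}, this system reduces further to the single Painlev\'e III equation $\eta_0'' = \eta_0^{-1}(\eta_0')^2 - r^{-1}\eta_0' + \eta_0^3 - \eta_0^{-1}$ of \cite[(2.36)]{wmtb}.

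In parallel, the logarithmic derivative identity \eqref{eq:logc} at $n=2$ yields an expression for $\partial_r \log \langle \sigma_{a_1}\sigma_{a_2}\rangle^{(m),+}$ and its free-boundary dual purely in terms of $h_0$ and $h_0'$. The $\cosh h_0$ (resp.\ $\sinh h_0$) prefactor is read off from the explicit form of the diagonal data $\mathcal A_{jj}$ via Proposition \ref{prop:phys}; the remaining contribution organizes into $(h_0')^2/4 - 4 \sinh^2 h_0$, and integrating from $\infty$ down to $\theta = |m| r$ produces the stated exponentials up to a single multiplicative constant.

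That constant, together with the asymptotic of $\eta_0$, is fixed by matching at $r \to \infty$. The subcritical $+$ scaling function tends to the square of the spontaneous magnetization, which in our normalization is $\mathcal C^2 \langle \sigma \sigma\rangle^{(m),+} \to (8|m|)^{1/4}$, while the supercritical free two-point function decays exponentially with leading asymptotic governed by the massive Green's function $K_0(2|m|r)/(2\pi)$ of the Helmholtz operator. Combined with $\sinh h_0 \sim h_0$ in the formula for the free dual case, the latter forces $h_0(\theta) \sim \frac{1}{\pi} K_0(2\theta)$ and therefore $\eta_0(\theta) \sim 1 - \frac{2}{\pi} K_0(2\theta)$; one verifies consistency with the subcritical asymptotic (where the leading correction sits at $O(K_0^2)$) and concludes by the uniqueness of the MTW solution with this asymptotic.

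The main obstacle I anticipate is the algebraic reduction in the first paragraph: disentangling the phase and modulus content of $\mathcal A_{12}, \mathcal B_{12}$ under rotational covariance and checking that the scalar ODE obtained is Painlev\'e III with the precise normalization of \cite[(2.36)]{wmtb}. This is essentially the computation carried out in \cite{sato-miwa-jimbo-ii, patr, palmer} within a $\mathbb C$-linear framework; in our $\mathbb R$-linear spinor setup it requires one more pass through the identification of the $f_j$ with their $\mathbb C$-linear analogues mentioned in the introduction, but no genuinely new analytic input.
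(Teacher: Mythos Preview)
Your overall strategy coincides with the paper's: specialize the system \eqref{eq:Bh}--\eqref{eq:Ah} to $n=2$, extract Painlev\'e III, and then integrate \eqref{eq:logc}. In fact the algebraic reduction you flag as the main anticipated obstacle is already carried out in full in Section~\ref{sec:2} (the ``$2$-point case'' subsection), yielding \eqref{eq:painlevefinal} and the explicit diagonal data \eqref{eq:betafinal}; the Corollary's proof in the paper simply quotes these, integrates the $\tanh(h)\,h'$ term arising from \eqref{eq:betafinal} inside \eqref{eq:logc} to produce the $\cosh h_0$ prefactor, and then multiplies by the Pfaffian ratio $\beta=\tanh h$ from Proposition~\ref{prop:phys} to get the $\sinh h_0$ formula for the free case. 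No passage through a $\mathbb C$-linear reformulation is needed.

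The substantive gap is in how you pin down the asymptotic $\eta_0(\theta)\sim 1-\tfrac{2}{\pi}K_0(2\theta)$. You propose to match against the large-$r$ behaviour of the supercritical free two-point function, invoking the massive Helmholtz Green's function as an independently known leading term. Within the paper's self-contained framework that asymptotic is not available a priori: the free scaling function is only accessed through $\beta$ via Proposition~\ref{prop:phys}, so appealing to it to determine $\beta$ is circular (and importing it from \cite{wmtb} would beg the question, since the Corollary \emph{is} the WMTB formula). The paper instead computes $\beta=-\mathcal B_{1,2}$ directly from the spinor $f_1$ using the residue formula \eqref{eq:res} on an explicit half-moon contour, controlling errors via \eqref{eq:expo}, \eqref{eq:max} and reducing to an integral of a product of $K_{1/2}$'s that evaluates exactly to $K_0(2|m|r)/\pi$; this is \eqref{eq:asympto}. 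That contour computation is the one genuinely new analytic input in the proof, and your proposal does not supply a substitute for it.
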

\begin{proof}
We may assume $a_1 = 0, a_2 = r$. From \eqref{eq:logc} and \eqref{eq:betafinal}:
\begin{equation*}
\mathcal{C}^2\left< \sigma_{a_1}\sigma_{a_2}\right>^{(m),+} = (8|m|)^{1/4} \exp \int_{\infty}^{r} \frac{ (h'(tr))^2}{4}-4m^2 \sinh^2h(tr) +\tanh(tr) h'(tr) d(tr),
\end{equation*}
 so integrating $\tanh$ term explicitly and setting $h_0(|m|r) = h(r)$, $\eta_0(|m|r) = \eta(r)$ gives the first result. Then Proposition \ref{prop:phys} and $\beta = \tanh h$ gives the second. See \eqref{eq:asympto} for the estimate at infinity.
\end{proof}

\subsection{Relating the Discrete Model}\label{subsec:12}

\subsubsection*{Note on Infinite Volume Limit}

As mentioned above, to speak of correlations on the infinite $\Lambda$, we need to take infinite volume limit. Recall that standard monotonicity arguments mean that taking the limit along any chosen sequence of growing domains (with either the plus or free boundary condition fixed); what is less obvious is that this limit is unique. As in \cite[Section 2.3]{cim21}, this can be done by appealing to the following Russo-Seymour-Welsh (RSW) type estimate: given $\epsilon>0$, there exists a large enough $A=A(\epsilon)$ such that subcritical Ising model on a simply connected subdomain of $\Lambda$ with plus boundary condition containing an annulus of aspect ratio $A$ has a loop of plus spins within the annulus (separating the inner and outer boundaries) with probability at least $1-\epsilon$. This formulation is a simple consequence of the RSW type estimate for the \emph{critical} FK-Ising model on isoradial lattice (originally due to \cite{chsm2012}; see \cite[Section 2.3]{cim21}) via coupling to the spin model and monotonicity.

In other words, the uniform estimate \cite[(2.10)]{cim21} holds for n-point correlations: if faces $a_1, \ldots, a_n$ are in the discretized ball $\Lambda_D$ of radius $D$ on $\Lambda$, then for the \textbf{subcritical} model,
\begin{equation}\label{eq:rsw}
(1-\epsilon)  \mathbb{E}^+_{\Lambda_{AD}} [\sigma_{a_1}\cdots\sigma_{a_n}] \leq \mathbb{E}^+_{G} [\sigma_{a_1}\cdots\sigma_{a_n}] \leq \mathbb{E}^+_{\Lambda_{AD}} [\sigma_{a_1}\cdots\sigma_{a_n}],
\end{equation}
where $G\subset \Lambda$ is any isoradial domain containing $\Lambda_{AD}$, including any infinite volume limit on $\Lambda$. As \cite[Remark 2.6]{cim21} notes, similar estimate holds for the supercritical model with free boundary condition.

\subsubsection*{Massive Holomorphic Spinors}

We study the correlation functions in the Ising model under scaling limit using \emph{massive holomorphic spinors}. By \emph{massive holomorphicity}, we mean the notion of perturbed holomorphicity $\overline{\partial} f = m\bar{f}$ for a constant $m<0$, where $\overline{\partial} := (\partial_x + i\partial_y)/2$ is one of the two Wirtinger derivatives on $\mathbb{C} \cong \mathbb{R}^2$.

By \emph{spinor}, we mean functions which have $-1$ multiplicative monodromy around given $n\geq 2$ points $a_{1},\ldots,a_{n}\in\mathbb{C}$; in other words, they are well-defined on a fixed double cover $\Cdub$ of $\mathbb{C} \setminus \{a_1, a_2, \ldots, a_n\}$ and gains a factor of $-1$ when switching sheets. 

Moreover, by defining the $n$ Ising spinors below, we will henceforth assume that each $\sqrt{z-a_k}$ is locally well-defined near $a_k$ (and varies smoothly under infinitesimal movements of $a_k$); indeed, flipping the sign of $\sqrt{z-a_k}$ is equivalent to replacing $f_k \to -f_k$.

\begin{defn}\label{defn:spinor}
The \emph{massive Ising spinors} are massive holomorphic functions $f_{1},f_{2},\ldots,f_{n}$ defined on the double cover $\Cdub$ uniquely by the following conditions: for $1\leq k \leq n$,
\begin{enumerate}
\item $L^2$ norm $\iint_\mathbb{C} |f_{k}|^2$ is finite (in fact, less than $\pi/2|m|$ by \eqref{eq:l2});
\item $f_{k}(z) \sim \frac{1}{\sqrt{z-a_j}}=:\frac{i\mathcal{B}_{j,j}}{\sqrt{z-a_j}}$ as $z\to a_j$;
\item $f_{k}(z) \sim  \frac{i\mathcal{B}_{j,k}}{\sqrt{z-a_j}}$ for some $\mathcal{B}_{j,k} \in \mathbb{R}$ as $z\to a_j \neq a_k$ (in fact, $\left| \mathcal{B}_{j,k} \right|\leq 1$ by \eqref{eq:l2}).
\end{enumerate}
\end{defn}
The uniqueness of $f_j$ is easy to see from Lemma \ref{lem:unique}. We show the existence in Proposition \ref{prop:phys}, and summarize the minor changes in notation from \cite{par19, cim21} in its proof.

To extract physical quantities out of these spinors, we utilize massive holomorphic \emph{formal powers} $Z^1_\nu, Z^i_\nu$: these are massive holomorphic functions which asymptotically coincide with holomorphic powers $z^{\nu}, iz^{\nu}$  as $z\to 0$ (since massive holomorphicity is $\mathbb R$-linear, we need two independent functions for each $\nu$). We give explicit definitions in \eqref{eq:formalpowers}. Analogously to the holomorphic case, massive holomorphic functions may be expanded in terms of formal powers. The precise result we need is the following analog of Laurent-Taylor expansion:

\begin{prop}\label{prop:laurent}
Given a massive holomorphic spinor $f$ branching around $a$ satisfying $f(z) = O\left( \left|z-a\right|^{\nu} \right)$ for some half-integer $\nu\in \mathbb Z + \frac{1}{2}$ as $z\to a$, there exist unique real numbers $A^1_{\nu}=A^1_{\nu}(a,f), A^i_{\nu}=A^i_{\nu}(a,f)$, and corresponding $A_\nu = A_\nu(a,f) := A^1_\nu + iA^i_\nu$ such that:
	\begin{equation}\label{eq:laurent}
		f(z) - A_\nu (z-a)^\nu = o(|z-a|^\nu)\mbox{ and }f(z) - A^1_{\nu} Z_{\nu}^1 (z-a) - A^i_{\nu} Z_{\nu}^i (z-a) = O\left(|z-a|^{\nu+1}\right)\text{ as }z\to a.
	\end{equation}
We define higher coefficients $A_{\nu+1}=A^1_{\nu+1}+iA^i_{\nu+1},\ldots$ by using the same procedure on the left hand side, etc.
\end{prop}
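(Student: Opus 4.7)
The plan is to reduce to a standard expansion result in the theory of generalized analytic (Bers--Vekua) functions by passing to the square-root uniformizer around $a$. Writing $z = a + w^2$, any spinor branching at $a$ becomes a single-valued function of $w$ near $0$; the Wirtinger chain rule $\bar\partial_w = 2\bar w\,\bar\partial_z$ converts $\bar\partial_z f = m\bar f$ into $\bar\partial_w f = 2m\bar w\,\bar f$, a Bers--Vekua equation with Hölder (in fact vanishing) coefficient at the origin. The hypothesis $f(z) = O(|z-a|^\nu)$ becomes $f(w) = O(|w|^{2\nu})$ with $2\nu$ an odd integer, and each formal power $Z^1_\nu(z-a), Z^i_\nu(z-a)$ becomes a single-valued generalized-analytic function of $w$ with leading behaviour $w^{2\nu}$ and $iw^{2\nu}$.

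I would then invoke the Bers-type Taylor/Laurent expansion developed in the Appendix. Via the similarity principle one may write $f = e^s g$ locally with $s$ Hölder continuous, $s(0)=0$, and $g$ ordinarily holomorphic in $w$; the Laurent expansion of $g$ combined with that of $e^s$ yields unique real coefficients $A^1_\nu, A^i_\nu$ for which
\begin{equation*}
f(z) - A^1_\nu Z^1_\nu(z-a) - A^i_\nu Z^i_\nu(z-a) = O(|z-a|^{\nu+1}).
\end{equation*}
The weaker form $f(z) - A_\nu(z-a)^\nu = o(|z-a|^\nu)$ follows at once, since $Z^1_\nu(z-a) - (z-a)^\nu$ and $Z^i_\nu(z-a) - i(z-a)^\nu$ are themselves $O(|z-a|^{\nu+1})$ by the defining asymptotics of the formal powers.

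Uniqueness is then immediate from leading asymptotics: if $A^1_\nu Z^1_\nu(z-a) + A^i_\nu Z^i_\nu(z-a) = o(|z-a|^\nu)$, division by $(z-a)^\nu$ and passage to the limit gives $A^1_\nu + iA^i_\nu = 0$, and the reality of the coefficients forces both to vanish. Iterating this procedure on the $O(|z-a|^{\nu+1})$ remainder produces the higher coefficients $A_{\nu+1}, A_{\nu+2}, \ldots$ announced after \eqref{eq:laurent}.

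The principal obstacle I anticipate is setting up the Bers-series machinery cleanly: one must verify that after $z-a = w^2$ the images of the formal powers $Z^1_\nu, Z^i_\nu$ of \eqref{eq:formalpowers} really are the correct building blocks for the new equation $\bar\partial_w F = 2m\bar w\,\bar F$, so that subtracting each gains a \emph{full} order of vanishing rather than losing regularity to the non-holomorphic remainder produced by the similarity representation. Once the Appendix supplies the interior regularity estimates and existence of formal powers for equations of the form $\bar\partial f = q\bar f$ with Hölder $q$, the remainder of the argument is routine bookkeeping.
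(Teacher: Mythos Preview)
Your approach is sound and rests on the same tool as the paper---the similarity principle of Proposition~\ref{prop:similarity}---but the passage to the uniformizer $z-a=w^2$ is an unnecessary detour that manufactures exactly the obstacle you worry about. The paper applies the similarity principle \emph{directly to the spinor} in the $z$-variable: as remarked at the end of the proof of Proposition~\ref{prop:similarity}, the exponent $s$ is built from the ratio $\bar f/f$, so $-1$ monodromies are harmless. One obtains $e^{-s}f$ holomorphic on the double cover, hence $e^{-s}f=(z-a)^\nu g$ with $g$ single-valued and holomorphic near $a$; setting $A_\nu:=e^{s(a)}g(a)$ and using only the H\"older continuity of $s$ gives the first estimate in \eqref{eq:laurent}. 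For the second, the paper just notes that $f-A^1_\nu Z^1_\nu-A^i_\nu Z^i_\nu$ is again a massive holomorphic spinor, now of order $o(|z-a|^\nu)$, and reapplies the similarity principle: the holomorphic-spinor factor, having a Laurent expansion in half-integer powers of $z-a$, is forced from $o(|z-a|^\nu)$ all the way to $O(|z-a|^{\nu+1})$.

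In your $w$-picture the analogous step only yields a holomorphic factor that is $O(|w|^{2\nu+1})$, one power short of the $O(|w|^{2\nu+2})$ required to translate back to $O(|z-a|^{\nu+1})$. You can close this gap by using the oddness $F(-w)=-F(w)$ inherited from the spinor monodromy: since $2\nu+1$ is even, a leading term $cw^{2\nu+1}$ is an even function and is killed by $F(-w)=-F(w)$. That works, but it is extra bookkeeping, and you would still need to argue (or circumvent) that the similarity exponent can be taken even in $w$. The paper's route avoids all of this, and in particular never needs to match the explicit formal powers \eqref{eq:formalpowers} against an abstract formal-power system for the $w$-equation; your ``principal obstacle'' simply does not arise.
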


Thanks to Proposition \ref{prop:laurent}, we may define the following expansion coefficients. First, define the shorthand $\bullet$ for the real combination $\mathcal{A}_{j,k} \bullet Z_{1/2} := \mathcal{A}^1_{j,k} Z^1_{1/2} + \mathcal{A}^i_{j,k} Z^i_{1/2}$, etc.

\begin{defn}\label{defn:AC}
For each $j,k\in \{1,2,\ldots,n\}$, define $\mathcal{A}_{j,k} := \mathcal{A}^{1}_{j,k} + i\mathcal{A}^{i}_{j,k}, \mathcal{D}_{j,k} := \mathcal{D}^{1}_{j,k} + i\mathcal{D}^{i}_{j,k}$ from the following expansion:
\begin{align}\label{eq:AC}
f_k(z) = \left( i\mathcal{B}_{j,k} \right) \bullet Z_{-1/2}(z-a_j) + \mathcal{A}_{j,k} \bullet Z_{1/2}(z-a_j) +\mathcal{D}_{j,k}\bullet Z_{3/2}(z-a_j) +O\left(|z-a_j|^{5/2} \right)\text{ as }z\to a_j. 
\end{align}
In particular, the coefficients are functions of $a_1,\ldots,a_n$, and we write $\mathcal{A}_{j,k}= \mathcal{A}_{j,k}(a_1,\ldots, a_n)$, etc.
\end{defn}

\subsubsection*{Probabilistic Interpretation of the Coefficients}

As stated above, the coefficients $\mathcal{A}_{j,k}, \mathcal{B}_{j,k}$ encode information about the scaling limit of Ising discrete correlations, and showing that they satisfy a closed system of deformation equations (that is, there is an eventual cancellation of the higher coefficients $\mathcal{D}_{j,k}$) in the positions $a_1,\ldots,a_n$ is a main goal of this paper. This correspondence between the Ising correlations and continuous spinors is the core scaling limit result we utilize from \cite{cim21}. Concretely, we use the following correspondence.

\begin{prop}\label{prop:phys}

The massive Ising spinors from Definition \ref{defn:spinor} exist for any $m<0$. In addition, their coefficients from Definition \ref{defn:AC} satisfy
\begin{align*}
\mathcal{A}_{k,k} =4\partial_{a_k}\log \left< \sigma_{a_1}\sigma_{a_2}\cdots \sigma_{a_n}\right>^{(m),+} ;\quad
|\operatorname{Pf} [\ree \mathcal{B}]| = \frac{\left< \sigma_{a_1}\sigma_{a_2}\cdots \sigma_{a_n}\right>^{(-m),\mathtt{f}}}{\left< \sigma_{a_1}\sigma_{a_2}\cdots \sigma_{a_n}\right>^{(m),+}},
\end{align*}
where $[\mathcal{B}]=[\mathcal{B}_{j,k}]_{j,k}$ is the matrix of coefficients $\mathcal{B}_{j,k}$ and $\operatorname{Pf}$ refers to the Pfaffian of the antisymmetric (by \eqref{eq:hermitian}) matrix $ [\ree \mathcal{B}]$.
\end{prop}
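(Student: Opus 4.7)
The plan is to obtain Proposition \ref{prop:phys} as the full-plane upgrade of the bounded-domain results of \cite{cim21}. There, discrete s-holomorphic spinors on isoradial $\Omega^\delta$ with a $1/\sqrt{z-a_k}$-type singularity at $a_k$, square-root ramification at the other $a_j$, and a Riemann-Hilbert boundary condition encoding plus spin boundary data are constructed and shown to converge as $\delta\downarrow 0$ to a continuum massive holomorphic spinor $F_{k,\Omega}$; moreover, its subleading expansion coefficients at the $a_j$ are identified with Ising correlation ratios and logarithmic derivatives. Proposition \ref{prop:phys} will then follow by exhausting $\mathbb{C}$ with such $\Omega$ and controlling the limits of the spinors and of their coefficient data.

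For existence, I would note that the $F_{k,\Omega}$ satisfy the uniform $L^2$-bound $\iint_\Omega |F_{k,\Omega}|^2 \leq \pi/(2|m|)$ coming from a massive Green's / energy identity (the bound \eqref{eq:l2}, elaborated via Bers-Vekua theory in the Appendix), together with uniform pointwise estimates on compacts away from the punctures, which also controls the leading terms of the expansion \eqref{eq:AC}. Since the prescribed singular behavior at each $a_j$ is imposed uniformly in $\Omega$, a standard normal-families argument extracts a subsequential limit as $\Omega \uparrow \mathbb{C}$ satisfying conditions (1)--(3) of Definition \ref{defn:spinor}; Lemma \ref{lem:unique} then promotes this to uniqueness and hence to convergence of the full net to a limit which I call $f_k$.

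For the coefficient identifications, on bounded domains the Kadanoff-Ceva / energy-insertion identity in \cite{cim21} gives $\mathcal{A}_{k,k}^\Omega = 4\partial_{a_k}\log\langle\sigma_{a_1}\cdots\sigma_{a_n}\rangle^{(m),+}_\Omega$ (the factor $4$ recording our convention $\partial_{a_k}=(\partial_x - i\partial_y)/2$), while the off-diagonal $\mathcal{B}_{j,k}^\Omega$ are two-point Kadanoff-Ceva fermion observables whose Pfaffian equals the free-to-plus correlation ratio on $\Omega$; the antisymmetry \eqref{eq:hermitian} is precisely what makes this Pfaffian well-defined. Both identities pass to the full plane: the right-hand sides converge by \eqref{eq:rsw} taken with arbitrarily small $\epsilon$, and the left-hand sides by the compactness above together with the continuity of coefficient extraction supplied by Proposition \ref{prop:laurent}.

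The main obstacle I anticipate is translating the $\mathbb{C}$-linear Kadanoff-Ceva conventions of \cite{cim21} into the $\mathbb{R}$-linear spinor conventions used here: matching leading normalizations at the $1/\sqrt{z-a_j}$, fixing sign ambiguities from branch choices of $\sqrt{z-a_k}$ (already flagged just before Definition \ref{defn:spinor}), and correctly splitting real and imaginary parts in \eqref{eq:AC}. Once this dictionary is pinned down --- one explicit check should suffice, by linearity --- the remainder is a routine combination of \eqref{eq:rsw}, \eqref{eq:l2}, Lemma \ref{lem:unique}, Proposition \ref{prop:laurent}, and the Appendix estimates.
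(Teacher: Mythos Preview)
Your proposal follows essentially the same route as the paper: construct the full-plane spinors as infinite-volume limits of the bounded-domain objects of \cite{cim21} via the uniform $L^2$ bound \eqref{eq:l2}, Arzel\`a--Ascoli, and the uniqueness Lemma \ref{lem:unique}; then pass the coefficient identities $\mathcal{A}^\Omega_{k,k}$ and the Pfaffian relation for $\mathcal{B}^\Omega$ to the limit using \eqref{eq:rsw} and \eqref{eq:res}.

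The one point where you underestimate the work is the Pfaffian identity. You frame the obstacle as a convention dictionary resolvable by ``one explicit check\ldots by linearity,'' but the actual issue is not linear: one must show that the $n$ two-point Kadanoff--Ceva correlations can be \emph{coherently} signed on a \emph{common} double cover $\Cdub$ so that (i) the Pfaffian decomposition \cite[(3.10)]{cck} holds with those signs and (ii) each converges to the already-fixed $\mathcal{B}_{j,k}$. The paper devotes most of its proof to this, tracking the Grassmannian correlations of \cite{cck} through the corner-variable $\eta_c$ factors, lifting the projection relations \cite[(3.1)]{cim21} to the double cover, and verifying the hypotheses of \cite[Lemma 3.14]{cim21} so that the residue $-\frac{1}{2}\oint \ree[F_k F_j]\,dz$ really picks out $\mathcal{B}_{j,k}$ with the correct sign. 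This is bookkeeping rather than a new idea, but it is the substance of the argument, not an afterthought.
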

\begin{proof}
First, let us note carefully small differences in convention: our spinors $f_j$ are normalized identically to \cite{chelkak-hongler,par19}, while it is $e^{i\pi/4}$ times the s-holomorphic spinors studied in \cite{cim21}. Coefficients $\mathcal{A}_{j,k}$ are twice the corresponding quantities in all these works (see \cite[(2.22)]{chelkak-hongler} and \cite[(4.20)]{cim21}). Similarly, while $\mathcal{B}_{j,k}$ for $j\neq k$ is always taken to be real, we additionally write $\mathcal{B}_{j,j}:=-i$ for notational convenience.

In smooth bounded simply connected domains $\Omega$, \cite[Theorem 4.17]{cim21} (see also the proof of \cite[Theorem 1.3]{cim21} for applicability for the $n>2$ case) shows that discrete spinors converge to bounded domain analogs $f_j^\Omega$ which continuously takes the boundary condition $f_j^\Omega \sqrt{n_{\mathtt{out}}} \in \mathbb R$ on (the double cover of) $\partial \Omega$, where $n_{\mathtt{out}}$ is the unit outer normal. It also satisfies all conditions from Definition \ref{defn:spinor} (applying \eqref{eq:gr} as in \eqref{eq:l2}).

Since $f_j^\Omega$ are uniformly $L^2$-bounded, taking infinite volume limit along domains $\Omega \uparrow \mathbb{C}$, by \eqref{eq:mvt} we may apply Arzel\`a-Ascoli theorem on compact subsets of $\Cdub$ to extract a subsequence which converges uniformly. Choosing an increasing sequence of compact subsets and diagonalizing, we may extract a subsequence along which $f_j^\Omega$ locally uniformly converges to a limit on all of $\Cdub$: any limit has to satisfy all three conditions of Definition \ref{defn:spinor}, so is unique.

If $\mathcal{A}^\Omega_{j,k}$ is defined as in Definition \ref{defn:AC} from $f^\Omega_j$, \cite[(1.6)]{cim21} gives that
\begin{equation}\label{eq:logbd}
\frac{\left< \sigma_{w}\sigma_{a_2}\cdots \sigma_{a_n}\right>^{(m),+}_\Omega}{\left< \sigma_{z}\sigma_{a_2}\cdots \sigma_{a_n}\right>^{(m),+}_\Omega} = \exp \int_z^w \ree \left[ \frac{1}{2} \mathcal{A}^\Omega_{1,1}(a_1, \ldots, a_n) da_1 \right],
\end{equation}
where $\int_z^w $ is the line integral along any contour from $z$ to $w$ in $\Omega$. As $\Omega \uparrow \mathbb{C}$, LHS converges to the ratio of full-plane scaling functions; $\mathcal{A}^\Omega_{1,1}(a_1, \ldots, a_n)$ for $a_1$ on compact sets away from $a_2, \ldots, a_n$ uniformly converges to $\mathcal{A}_{1,1}(a_1, \ldots, a_n)$ by \eqref{eq:res} since $f^\Omega_1$ converges $f_1$ locally uniformly. That is, (without loss of generality) $\mathcal{A}_{1,1}$ is the logarithmic derivative as desired.

Now we move on to the second Pfaffian identity (we may assume $n$ is even since both sides naturally vanish if $n$ is odd). This identity is an expression of the standard fact that \emph{free fermion} correlations decompose into a Pfaffian of 2-point correlations and dual spin correlations may be obtained as fermion correlations in the Kadanoff-Ceva sense \cite{kadanoff-ceva} weighted by primal spins (see also, e.g. \cite{chi21}). For an informed reader, it should therefore be clear that \emph{some} identity of this type should hold for suitable sign choices of $\pm \mathcal{B}_{j,k}$: indeed, $n$-spin weighted 2-point fermion correlations are generally sign-ambiguous, and converge to similarly sign-ambiguous $\mathcal{B}_{j,k}$ (\cite[(4.34)]{cim21} carries out $n=2$ case, but convergence \emph{up to sign} in the $n>2$ case is fully analogous). 

Therefore, we find the central task here is to check that there is a coherent way to define (i.e. fix signs of) these 2-point correlations such that they appear in the Pfaffian decomposition and converge to $\mathcal{B}_{j,k}$, whose signs we have already chosen. We present an argument based on the combinatorial correspondence of \cite{cck}, which requires elementary but unpleasantly diligent conversions among various combinatorial representations and conventions; for readers more familiar with analysis of massive discrete s-holomorphic spinors, we note that a discrete complex analytic approach as in the proof of (critical case) \cite[Proposition 2.24]{chi21} is still available.

 For an isoradial discretization $\Omega^\delta$ of large $\Omega$, \cite[(3.10), Proposition 3.3]{cck} gives (valid for any mass $m$)
\begin{equation}\label{eq:pf}
\left|\left<\chi_{c_1}\cdots \chi_{c_n} \right>_{[a_1,\ldots,a_n]}\right|=\left|\operatorname{Pf}\left[ \left<\chi_{c_j}\chi_{c_k}\right>_{[a_1,\ldots,a_n]} \right]_{j,k} \right|= \frac{\mathbb{E}^+_{\Omega^\delta}\left[ \mu_{b_1}\mu_{b_2}\cdots \mu_{b_n}\right]}{\mathbb{E}^+_{\Omega^\delta}\left[ \sigma_{a_1}\sigma_{a_2}\cdots \sigma_{a_n}\right]}.
\end{equation}
$b_j$ is a primal vertex incident to $a_j$, and $c_j = (a_jb_j)$ is the adjacent primal-dual vertex pair, or a \emph{corner}. $\mu_b$ is a \emph{disorder insertion} at a primal vertex $b$, and the pure disorder correlation is equal to spin correlation of the dual model \cite[(2.5)]{cim21} under Kramers-Wannier duality \cite{kramers-wannier}, so RHS is nonnegative and converges to the desired ratio of the scaling functions (first in $\Omega$, then taken to infinite volume limit). The \emph{Grassmannian} correlations on the left require \cite[(3.5), (3.10)]{cck} fixing $n$ lifts of $c_j$ on $\Cdub$ and $n$ square roots $\eta_{c_j}$ (also defined in \cite[(2.1)]{cim21}, remembering to adjust $\varsigma = i$ to match \cite{chelkak-hongler, par19}).

Thanks to \cite[Proposition 3.3]{cck}, $\left<\chi_{c_j}\chi_{c}\right>_{[a_1,\ldots,a_n]}$ is equal to the real spinor $\frac{\mathbb{E}^+_{\Omega^\delta}\left[ \chi_c \sigma_{a_1}\cdots \sigma_{a_{j-1}}\mu_{b_j}\sigma_{a_{j+1}}\cdots \sigma_{a_n} \right]}{\mathbb{E}^+_{\Omega^\delta}\left[ \sigma_{a_1}\sigma_{a_2}\cdots \sigma_{a_n}\right]}$, i.e. the $n$-point version of \cite[(4.32)]{cim21} (cf. also the branching structure of the latter, \cite[Remark 2.2]{cim21}). From the real spinor we may define discrete s-holomorphic spinors $F^{\Omega^\delta}_j$ on edges (through projection relations \cite[Definition 3.1]{cim21}) which converge (up to sheet choice) to $ \left(\frac{2}{\pi}\right)^{1/2}f^\Omega_j$ \cite[Theorem 4.17]{cim21}. In fact, we may consider $\eta_c \left<\chi_{c_j}\chi_{c}\right>_{[a_1,\ldots,a_n]}$ as a function of general corners $c=(ab)$ of $G$ lifted up to $\Cdub$, since $\bar{\eta}_c$ in $\tau(Q)$ is canceled in \cite[(3.5), (3.10)]{cck}. From the same definition it is clear that defining $n$ functions $\eta_c \left<\chi_{c_j}\chi_{c}\right>_{[a_1,\ldots,a_n]}$ and $F^{\Omega^\delta}_j$ for $j=1,\ldots,n$ on the same double cover $\Cdub$ is possible. Then $n$ functions $F^{\Omega^\delta}_j$ may also be defined on the same double cover, since the projection relation \cite[(3.1)]{cim21} may be lifted to the double cover with $\hat{\eta}_{c,z}/\eta_c$ always having positive real part. 

Let us be specific. Recall we are considering choices of graph faces $a_j$ which converge to fixed continuum points $a_j \in \mathbb C$. Then we choose incident vertices $b_j$, and lifts of $c_j, b_j$ on $\Cdub$ (branching around discrete faces $a_j$). Then we may fix the sign $\eta_{c_j}=\underline{\eta}_{c_j}$ and define $\eta_c \left<\chi_{c_j}\chi_{c}\right>_{[a_1,\ldots,a_n]}$ and $F^{\Omega^\delta}_j$ so that we have convergence to $ \left(\frac{2}{\pi}\right)^{1/2} f^\Omega_j$ for each $j$. We would like to extract $\mathcal{B}^\Omega_{j,k}$ from the residue at $a_k$ of $f^\Omega_j f^\Omega_k$, as in the proof of \cite[Theorem 4.17]{cim21}.

First, we need to carefully check that $F^{\Omega^\delta}_{k},F^{\Omega^\delta}_{j}$ respectively satisfies the condition for $F^{(m),\delta}_1, F^{(m),\delta}_2$ in \cite[Lemma 3.14]{cim21}. Following the combinatorial definition, both are spinors on the same $\Cdub$ but the former has the $\pm$-type `s-holomorphic singularity' at $c_k$ (see \cite[(2.2)]{par19}, \cite[Lemma 3.2]{chelkak-hongler}), effectively moving the monodromy to $b_k$. A convenient way of defining the spinors $X_1, X_2$ on the `slightly different' $\Upsilon^\times_{[b_k]}, \Upsilon^\times_{[a_k]}$ is as follows: take the already defined spinors $\eta_c \left<\chi_{c_k}\chi_{c}\right>_{[a_1,\ldots,a_n]}, \eta_c \left<\chi_{c_j}\chi_{c}\right>_{[a_1,\ldots,a_n]}$ on common $\Cdub$, and multiply by common $\bar{\eta}_c$ on $\Upsilon^\times$. At the singularity $c_k$ for $\eta_c \left<\chi_{c_k}\chi_{c}\right>_{[a_1,\ldots,a_n]}$, use the `right-side' value $\underline{\eta}_{c_k}$. This creates real-valued spinors on identical $ \Upsilon^\times_{[a_k]}$, but the propagation equation for $X_1$ fails around $z^{-}$: re-wiring as in \cite[Figure 4]{cim21} to $\Upsilon^\times_{[b_k]}$ fixes this.

All in all, LHS in \cite[(3.17)]{cim21} is exactly $2\bar{\eta}^2_{c_k} \left(\underline{\eta}_{c_k} \right) \left(\eta_{c_k} \left<\chi_{c_j}\chi_{c_k}\right>_{[a_1,\ldots,a_n]}\right)$, which is equal to $2\left<\chi_{c_j}\chi_{c_k}\right>_{[a_1,\ldots,a_n]}$ defined using pre-determined $\underline{\eta}_{c_k}, \underline{\eta}_{c_j}$. The RHS $-\frac{1}{2}\oint^{[(m),\delta]} \ree[F^{\Omega^\delta}_{k}F^{\Omega^\delta}_{j}] dz$ (adjusting by $e^{i\pi/4}$ for each factor converts $\imm$ to $-\ree$) converges \cite[Remark 3.4]{cim21} to $-\frac{1}{\pi}\oint_{a_k} \ree[ i\mathcal{B}^\Omega_{j,k}/(z-a_k) dz]=2\mathcal{B}^\Omega_{j,k}$, so we have the desired Pfaffian identity in bounded domains.

As above $\mathcal{B}^\Omega \to \mathcal{B}$ as $\Omega\uparrow \mathbb C$, so we have the desired identity.
\end{proof}

\begin{rem}
The previous preposition relied on infinite-volume limit of spin correlations, scaling functions, and continuous spinors, but \emph{did not} mention discrete spinors defined on the whole $\Lambda$: this is a deliberate choice made in order to limit the length of presentation. Nonetheless, it is both natural and accurate to think about $f_j$'s as limits of analogous discrete full-plane spinors, defined by infinite volume limit; see \cite[Section 2.2]{par19} for the square lattice case.
\end{rem}

Given Proposition \ref{prop:phys}, one can write a formula in the full-plane like \eqref{eq:logbd}, noting that thanks to an RSW type argument as in \eqref{eq:rsw}, a spin `at infinity' ($m<0$) factors to an independent weight of $(-8m\delta)^{1/8}$ by \cite[Proposition 1.5]{cim21} and, e.g. \cite[(19.5.5)]{dlmf}. For example, using radial integral for real $t$:
\begin{equation}\label{eq:logc}
\mathcal{C}^n\left< \sigma_{a_1}\sigma_{a_2}\cdots \sigma_{a_n}\right>^{(m),+} = (-8m)^{n/8} \exp \int_{\infty}^{1} \ree \left[ \frac{1}{2} \sum^{n}_{k=1}\mathcal{A}_{k,k}(ta_1, \ldots, ta_n)a_k dt \right].
\end{equation}

\section{Deriving the Deformation Equation}\label{sec:2}
We now carry out the main analysis. The core idea is that the derivatives of the spinors with respect to the spin positions $a_1, \ldots, a_n$ (preserving the $-1$ monodromy---hence \emph{isomonodromic deformation}) are themselves massive holomorphic functions with at most $3/2$-order poles at each monodromy, and therefore may be expressed in terms of the spinors and their (static) derivatives.

\subsection{Necessary Groundwork}

We first introduce the necessary ingredients and verify the assumptions in the proof strategy, namely the differentiability under deformation and linear independence of the static derivatives.

\subsubsection*{Explicit Formal Powers}

We first give an explicit definition for massive holomorphic formal powers. For half-integers $\nu$ (while we assumed $m<0$, the formula below works for any $m\in \mathbb R$)
\begin{equation}\label{eq:formalpowers}
Z^1_\nu (z) = \frac{\Gamma(\nu+1)}{|m|^\nu} \left( W_\nu(z) +(\sgn m) \overline{W_{\nu+1}(z)} \right);\quad Z^i_\nu =i \frac{\Gamma(\nu+1)}{|m|^\nu} \left( W_\nu(z) -(\sgn m)  \overline{W_{\nu+1}(z)} \right),
\end{equation}
where $W_\nu(z=re^{i\theta}) := e^{i\nu\theta}I_\nu (2|m|r)$ and $I_\nu$ is the modified Bessel function of the first kind. Their asymptotic $Z^1_\nu(z)\sim z^\nu, Z^i_\nu(z)\sim iz^\nu$ as $z\to 0$ is simple to verify from \cite[§10.30]{dlmf}.

Massive holomorphicity for $Z^1_{\nu},Z^i_\nu$ is easy to see from $\partial W_\nu = |m| W_{\nu-1}, \overline{\partial} W_\nu = |m| W_{\nu+1}$, which in turn is straightforward to verify from the standard identity $I_\nu'(r) = I_{v\pm 1}(r)\pm\frac{\nu}{r}I_\nu(r)$ \cite[§10.29]{dlmf} and $\partial =\frac{1}{2}(\partial_x - i\partial_y)= \frac{e^{-i\theta}}{2}(\partial_r - ir^{-1} \partial_\theta)$ and $\overline{\partial} =\frac{1}{2}(\partial_x + i\partial_y)= \frac{e^{i\theta}}{2}(\partial_r + ir^{-1} \partial_\theta)$.

In fact, we record here (using $\partial_x = \partial + \overline{\partial}, \partial_y = i(\partial - \overline{\partial})$):
\begin{align}\label{eq:formalder}
\partial_xZ^1_\nu = \nu Z^1_{\nu-1} + \frac{m^2}{\nu+1}Z^1_{\nu+1};&\quad \partial_xZ^i_\nu = \nu Z^i_{\nu-1} + \frac{m^2}{\nu+1}Z^i_{\nu+1};\\ \nonumber
\partial_yZ^1_\nu = \nu Z^i_{\nu-1} - \frac{m^2}{\nu+1}Z^i_{\nu+1};&\quad \partial_yZ^i_\nu = -\nu Z^1_{\nu-1} + \frac{m^2}{\nu+1}Z^1_{\nu+1}.
\end{align}

\subsubsection*{Green-Riemann theorem}

A natural and powerful tool to work with massive holomorphic functions $f, g$ is the classical Green-Riemann's theorem: around smooth domains $C\subset \mathbb C$ we have
\begin{equation}\label{eq:gr}
\oint_{\partial C} fg dz = 2i \iint_C \overline{\partial} [fg]  = 2i \iint_C 2m \ree[f\overline{g}]  = 2i \iint_C 2m \left<f,g\right>  \in i\mathbb R,
\end{equation}
where $\left< f,g \right>$ refers to the inner product of the complex values $f,g$ as vectors in $\mathbb R^2$. The product of two spinors having $-1$ multiplicative monodromy around the same point is a well-defined function in $\mathbb C$, rather than the double cover, and thus we may use the same formula.

\subsubsection*{Cauchy-type Half-Integer Residue Formula}

One crucial consequence of \eqref{eq:gr} is a residue formula using the formal powers akin to the holomorphic case. Specifically, it implies that the real part of the contour integral $\oint_{\gamma} fg dz$ remains invariant on any loop homotopic to $\gamma$ in the domain of massive holomorphicity. So suppose a spinor $f$ has the asymptotic \eqref{eq:laurent} near $a$, then we have the following formula
\begin{equation}\label{eq:res}
A^1_\nu(a,f) =-\frac{1}{2\pi} \ree \oint_\gamma f\cdot Z^i_{-\nu-1} dz;\quad A^i_\nu(a,f) =-\frac{1}{2\pi} \ree \oint_\gamma f \cdot Z^1_{-\nu-1} dz,
\end{equation}
which is easily verified by setting $\gamma = \partial B_r (a)$ and studying the asymptotic of the integrand as $r \downarrow 0$.

Namely, it is simple to verify from the definition \eqref{eq:formalpowers} that the real part of the contour integral as in \eqref{eq:gr} along $\partial B_r(a)$ of any pairing $Z_{\nu_1}\cdot Z_{\nu_2}$ is nonzero only for the case $Z^1_{\nu_1}\cdot Z^i_{\nu_2}$ with $\nu_1 + \nu_2 = -1$, when it is one. This calculation also gives that, as in the holomorphic case, the higher coefficients $A^1_{\nu + 1}, A^i_{\nu + 1}, \ldots$ may also be computed simply by coupling $f$ respectively with $Z^i_{-\nu}, Z^1_{-\nu},\ldots$. This formula in particular justifies that the derivatives of the series coefficients of the Ising spinors with respect to the locations $a_j$ exist as soon as the spinors have corresponding derivatives in the bulk.

\subsubsection*{Spinors and their Derivatives Form a Basis}

We now show that the Ising spinors $f_1, \ldots, f_n$ and their derivatives $\partial_x f_1,  \ldots, \partial_x f_n$, $\partial_y f_1, \ldots, \partial_y f_n$ form a basis of the real vector space $V_{-3/2}$, defined as the space of massive holomorphic spinors $f$ on $\Cdub$ having asymptotics (for given fixed $a_1, \ldots, a_n$)

\begin{enumerate}
\item	$f(z) = O\left(|z-a_j|^{-3/2}\right)$ as $z\to a_j$ for all $1\leq j \leq n$;
\item	$f(z) = O(|z|^{-n/2})$ as $|z|\to\infty$.
\end{enumerate}
Define the subspace $V_{-1/2} \subset V_{-3/2}$ analogously by replacing the exponent $-3/2$ with $-1/2$, or equivalently imposing $A_{-3/2}(a_k,f)=0$ for all $k$.

We once again benefit from \eqref{eq:gr}. Suppose we have $f,g \in V_{-1/2}$, then applying \eqref{eq:gr} in $\mathbb C$ (more precisely, excising small disks around $a_j$ as in \eqref{eq:res} and using exponential decay at infinity; note the orientation of the excised disks are reversed) yields
\begin{align} 
2\pi  \left[ \imm \left(A_{-1/2}(a_1,f)A_{-1/2}(a_1,g)\right)+\cdots + \imm \left(A_{-1/2}(a_n,f)A_{-1/2}(a_n,g)\right) \right] &= 0 \label{eq:innerproduct} \\
-2\pi \left[ \ree\left(A_{-1/2}(a_1,f)A_{-1/2}(a_1,g)\right)+\cdots + \ree \left(A_{-1/2}(a_n,f)A_{-1/2}(a_n,g)\right) \right] &= 2 \iint_\mathbb{C} 2m\ree [f\overline{g}], \label{eq:l2}
\end{align}
where we take respectively real and imaginary parts. Note further that \eqref{eq:innerproduct} is valid even in $V_{-3/2}$ as long as we consider combinations of type $A_{1/2}\cdot A_{-3/2}$ as well.

Applying the above on the Ising spinors $f_1, \ldots, f_n$ already yields an important information. Applying \eqref{eq:innerproduct} on $f_j, f_k$, we see
\begin{equation}\label{eq:hermitian}
\ree \mathcal{B}_{j,k} + \ree \mathcal{B}_{k,j} = 0,\mbox{ or the matrix }[i\mathcal{B}]\mbox{ is Hermitian}.
\end{equation}

\begin{lem}\label{lem:unique}
The numbers $A^1_{-1/2}(a_1,f)\ldots A^1_{-1/2}(a_n,f)$ and $A_{-3/2}(a_1,f),\ldots,A_{-3/2}(a_n,f)$ uniquely determine $f \in V_{-3/2}$. In particular, $\dim_\mathbb{R} V_{-1/2}=n$ and $\dim_\mathbb{R} V_{-3/2}$ is at most $3n$.
\end{lem}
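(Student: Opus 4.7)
The plan is to exploit the quadratic identity \eqref{eq:l2}. By linearity, uniqueness reduces to showing that an $f\in V_{-3/2}$ with $A^1_{-1/2}(a_j,f)=0$ and $A_{-3/2}(a_j,f)=0$ for every $j$ must vanish. The vanishing of all $A_{-3/2}$ data immediately places $f$ in $V_{-1/2}$ by the definition of that subspace, so $A_{-1/2}(a_j,f)$ is the genuine leading coefficient at each $a_j$. The hypothesis $A^1_{-1/2}(a_j,f)=0$ then forces $A_{-1/2}(a_j,f)=iA^i_{-1/2}(a_j,f)$, and consequently $A_{-1/2}(a_j,f)^2 = -(A^i_{-1/2}(a_j,f))^2 \leq 0$.

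Substituting $g=f$ into \eqref{eq:l2} yields
\begin{equation*}
2\pi\sum_{j=1}^{n}\bigl(A^i_{-1/2}(a_j,f)\bigr)^2 \;=\; 4m\iint_{\mathbb{C}}|f|^2.
\end{equation*}
Since $m<0$, the left side is nonnegative while the right side is nonpositive, so both must vanish. In particular $\iint|f|^2=0$, giving $f\equiv 0$ by continuity of the massive holomorphic $f$. This single squeeze argument produces the asserted uniqueness, and hence both upper bounds $\dim_\mathbb{R} V_{-3/2}\leq 3n$ and $\dim_\mathbb{R} V_{-1/2}\leq n$.

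For the matching lower bound $\dim_\mathbb{R} V_{-1/2}\geq n$, I would invoke the existence part of Proposition \ref{prop:phys} to supply the Ising spinors $f_1,\ldots,f_n$, and check that they each lie in $V_{-1/2}$: conditions (2)--(3) of Definition \ref{defn:spinor} give the $O(|z-a_j|^{-1/2})$ control at the singularities, while $L^2$-finiteness together with massive holomorphicity (equivalent to $\Delta f = 4m^2 f$) forces exponential decay at infinity, a fortiori the polynomial decay required of $V_{-3/2}$. Reading the normalizations off Definition \ref{defn:spinor} gives $A_{-1/2}(a_k,f_k)=1$ and $A_{-1/2}(a_j,f_k)=i\mathcal{B}_{j,k}\in i\mathbb{R}$ for $j\neq k$, so the matrix $[A^1_{-1/2}(a_j,f_k)]_{j,k}$ is the $n\times n$ identity and the $f_k$ are \emph{a fortiori} linearly independent. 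The only step that takes any real thought is the sign-squeeze in the middle paragraph; everything else is essentially bookkeeping.
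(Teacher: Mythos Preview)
Your proof is correct and follows essentially the same route as the paper: reduce by linearity to $f\in V_{-1/2}$ with purely imaginary leading coefficients, then apply the sign-squeeze from \eqref{eq:l2} with $g=f$ to force $f\equiv 0$, and invoke the existence of $f_1,\ldots,f_n$ (via Proposition~\ref{prop:phys}) for the lower bound on $\dim_\mathbb{R} V_{-1/2}$. You have written out a few steps the paper leaves implicit (the explicit linear-independence check via the identity matrix of $A^1_{-1/2}$ coefficients, and the decay at infinity from $L^2$-finiteness plus $\Delta f=4m^2 f$), but the key idea is identical.
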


\begin{proof}
It suffices to show any $f \in V_{-1/2} \subset V_{-3/2}$ such that $A_{-1/2}(a_k,f)=iA^i_{-1/2}(a_k,f)$ for all $k$ is identically zero. This is easily shown by applying \eqref{eq:l2} and seeing $2\pi \left( |A^i_{-1/2}(a_1,f)|^2 + \cdots +|A^i_{-1/2}(a_n,f)|^2 \right) = \iint_{\mathbb C} 4m|f|^2 \leq 0$, so $f \equiv 0$. Since $f_1,\ldots,f_n$ already span $V_{-1/2}$, the dimension of $V_{-1/2}$ is exactly $n$.
\end{proof}

Now we calculate asymptotics of $\partial_x f_k$ and $\partial_y f_k$. We may differentiate the conditions 2-3 in Definition \ref{defn:spinor} term-by-term and have $O\left(|z-a|^{3/2} \right)$-rate errors; this is a simple consequence of \eqref{eq:mvt}. Using \eqref{eq:formalder}, as $z\to a_j$,
\begin{align}\label{eq:expder}
\partial_x f_k(z) &= -\frac{i\mathcal{B}_{j,k}}{2}\bullet Z_{-3/2}(z-a_j) + \frac{\mathcal{A}_{j,k}}{2}\bullet Z_{-1/2}(z-a_j) + \left(\frac{3\mathcal{D}_{j,k}}{2} + 2im^2 \mathcal{B}_{j,k} \right)\bullet Z_{1/2}(z-a_j) + O\left(|z-a_j|^{3/2} \right);\\ \nonumber\
\partial_y f_k(z) &= \frac{\mathcal{B}_{j,k}}{2}\bullet Z_{-3/2}(z-a_j) + \frac{i\mathcal{A}_{j,k}}{2} \bullet Z_{-1/2}(z-a_j)+ \left(\frac{3i\mathcal{D}_{j,k}}{2}+ 2m^2 \mathcal{B}_{j,k} \right) \bullet Z_{1/2}(z-a_j) + O\left(|z-a_j|^{3/2} \right).
\end{align}

Let us introduce another shorthand. For complex matrix elements (say) $\mathcal{A}_{j,k}$, define the $2n\times 2n$ \textbf{real} matrix $\lr{\mathcal{A}}$ by replacing the complex entries by the $2\times 2$ real matrix in the usual way:
\begin{equation}\label{eq:lr}
 \mathcal{A}_{j,k}  \to 
 \begin{pmatrix}
 \ree \mathcal{A}_{j,k} & -\imm \mathcal{A}_{j,k} \\
 \imm \mathcal{A}_{j,k} & \ree \mathcal{A}_{j,k}
 \end{pmatrix}.
\end{equation} 
Clearly, this matrix may be treated in many ways like the complex matrix $[\mathcal{A}]$: for example, one is invertible if and only if the other is.

\begin{prop}
The spinors $ \partial_x f_1, \partial_y f_1, \ldots, \partial_y f_n$ are linearly independent over $\mathbb R$. Equivalently, $\lr{i\mathcal{B}}$ is invertible.
\end{prop}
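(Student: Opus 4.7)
The plan is to read off the $Z_{-3/2}$-coefficient of any hypothetical vanishing real combination of the $2n$ derivative spinors at each $a_j$ using \eqref{eq:expder}, identify the resulting linear system with $\lr{i\mathcal{B}}$, and then prove $[i\mathcal{B}]$ is invertible via a short positivity argument using \eqref{eq:l2}. Concretely, if $\sum_k (\alpha_k \partial_x f_k + \beta_k \partial_y f_k) \equiv 0$ for real $(\alpha_k, \beta_k)$, the leading singularity at each $a_j$ must vanish, which via \eqref{eq:expder} reads $\tfrac{1}{2}\sum_k (\beta_k - i\alpha_k)\mathcal{B}_{j,k} = 0$. Setting $\gamma_k := \beta_k - i\alpha_k$, this is exactly $[\mathcal{B}]\gamma = 0$, or equivalently $[i\mathcal{B}]\gamma = 0$; since a complex matrix $A$ is invertible iff its real realization $\lr{A}$ is (via $\det\lr{A}=|\det A|^2$), it is enough to establish invertibility of $[i\mathcal{B}]$.

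For the latter, $\mathcal{B}_{k,k} = -i$ combined with \eqref{eq:hermitian} allows one to decompose $[i\mathcal{B}] = I + i\tilde{\mathcal{B}}$, where $\tilde{\mathcal{B}}_{j,k} := \mathcal{B}_{j,k}$ for $j \neq k$ and $\tilde{\mathcal{B}}_{k,k} := 0$ is real antisymmetric. Applying \eqref{eq:l2} to $f = g = \sum_k c_k f_k \in V_{-1/2}$ for arbitrary $c \in \mathbb{R}^n$, a direct computation from Definition \ref{defn:spinor} gives $A_{-1/2}(a_j,f) = c_j + i(\tilde{\mathcal{B}}c)_j$, so $\operatorname{Re} A_{-1/2}(a_j,f)^2 = c_j^2 - (\tilde{\mathcal{B}} c)_j^2$, and summing over $j$ yields
\begin{equation*}
-2\pi\bigl(\|c\|^2 - \|\tilde{\mathcal{B}}c\|^2\bigr) = 4m\iint_{\mathbb{C}} |f|^2.
\end{equation*}
Now $f_1,\ldots,f_n$ are $\mathbb{R}$-linearly independent (as one sees by taking real parts of this very $A_{-1/2}$ identity), so for $c \neq 0$ the right-hand side is strictly negative; hence $\|\tilde{\mathcal{B}}c\| < \|c\|$ for every nonzero real $c$, which by compactness of the unit sphere upgrades to the strict operator bound $\|\tilde{\mathcal{B}}\|_{\mathrm{op}} < 1$.

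Since $\tilde{\mathcal{B}}$ is real antisymmetric, its spectrum lies in $i \cdot (-1,1)$, so the eigenvalues of $[i\mathcal{B}] = I + i\tilde{\mathcal{B}}$ all lie in $(0,2]$; in particular $[i\mathcal{B}]$ is positive-definite Hermitian, hence invertible, and the first paragraph then yields the desired linear independence. The main obstacle I anticipate is the leading-coefficient bookkeeping in the first paragraph---translating the mixed-parity entries $-\tfrac{i}{2}\mathcal{B}_{j,k}$ and $\tfrac{1}{2}\mathcal{B}_{j,k}$ cleanly into the single complex equation $[\mathcal{B}]\gamma=0$---since the subsequent spectral calculation is essentially formal once $[i\mathcal{B}] = I + i\tilde{\mathcal{B}}$ is in hand.
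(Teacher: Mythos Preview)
Your proof is correct and follows essentially the same route as the paper: you identify the $A_{-3/2}$-coefficient matrix with $-\tfrac{1}{2}\lr{i\mathcal{B}}$, decompose $[i\mathcal{B}]=I+i\tilde{\mathcal{B}}$ with $\tilde{\mathcal{B}}$ real antisymmetric, and bound its spectral radius below $1$ via the positivity identity \eqref{eq:l2} applied to real combinations $\sum c_k f_k$. The only cosmetic difference is that the paper applies \eqref{eq:l2} directly to an eigenvector pair $(v,w)$ of $\tilde{\mathcal{B}}$ (obtaining $2\pi(\lambda^2-1)<0$ immediately), whereas you bound $\|\tilde{\mathcal{B}}c\|<\|c\|$ for all nonzero $c$ and then invoke compactness---both reach the same conclusion.
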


\begin{proof}
It suffices to show the column vectors of $2n$ coefficients $A^1_{-3/2}(a_1),A^i_{-3/2}(a_1),\ldots,A^i_{-3/2}(a_n)$  for the given $2n$ functions form an invertible matrix. Given \eqref{eq:expder}, this matrix is seen to be precisely $-\frac{1}{2}\lr{i\mathcal{B}}$.

Let us work with the complex matrix $[i\mathcal{B}]$. Recall that $\mathcal{B}_{j,k}$ is real if and only if $j \neq k$, and $i\mathcal{B}_{k,k} = 1$ for all $k$. Therefore, we may decompose $[i\mathcal{B}] = I + iB$, where $B:=\imm \left[ i\mathcal{B} \right]$ is the $n\times n$ real matrix made of only the off-diagonal elements of $\mathcal{B}_{j,k}$, which is antisymmetric by \eqref{eq:hermitian}.

By standard linear algebra, any nonzero eigenvalue of $B$ comes in the form $i\lambda$, where $\lambda$ is real and there exist a pair of $n$ dimensional real vectors $v,w$ of unit norm such that $Bv = \lambda w, Bw=-\lambda v$. In other words, by definition of $\mathcal{B}_{j,k}$, $f: = v_1 f_1 + \cdots + v_n f_n$ has poles $A_{-1/2}(a_k,f) = v_k + i\lambda w_k$ for all $k$. Now apply \eqref{eq:l2} on $f$, and we get
\begin{equation*}
-2\pi \left[ (v_1^2-\lambda^2 w_1^2) + \cdots (v_n^2 - \lambda^2 w_n^2) \right] = 2\pi(\lambda^2-1) = 2\iint 2m|f|^2 <0,
\end{equation*}
since $v,w$ have unit norm. That is, $B$ has spectral radius strictly smaller than $1$, and $I+iB$ is invertible.
\end{proof}

Now that we have linear independence, we may produce `pure' basis vectors $g^1_{1},g^i_{1},\ldots,g^1_{n},g^i_{n}$ whose coefficients $A^1_{-1/2},A_{-3/2}$ all vanish except $A^1_{-3/2}(a_k,g^1_k)=A^i_{-3/2}(a_k,g^i_k)=1$ for each $k$. 

Studying \eqref{eq:expder}, we see that the coefficients for the required linear combination of $\partial_x f_k, \partial_y f_k$ would be given precisely by the entries of the inverse $\lr{-i\mathcal B/2}^{-1}$, essentially postcomposing this matrix on the $2n\times 2n$-matrix of the $3/2$-order (real and imaginary) expansion coefficients at $a_j$ of $\partial_x f_k, \partial_y f_k$. The entries of this real matrix may be recovered from the real and imaginary parts of the $n\times n$ complex matrix $\left[-i\mathcal B/2\right]^{-1}$ according to \eqref{eq:lr}. Once we have the desired $3/2$-order behavior, we cancel out $A^1_{-1/2}$ using $f_k$. Explicitly, we claim that the following definition works:
\begin{align*}
g^1_{k} &:= \sum_{k'=1}^n \ree \left[-i{\mathcal B}/{2}\right]^{-1}_{k',k} \partial_x f_{k'} + \imm \left[-i{\mathcal B}/{2}\right]^{-1}_{k',k} \partial_y f_{k'} - \ree \left(\left[{\mathcal A}/{2}\right]\left[-i{\mathcal B}/{2}\right]^{-1}\right)_{k',k} f_{k'}; \\
g^i_{k} &:= \sum_{k'=1}^n -\imm \left[-i{\mathcal B}/{2}\right]^{-1}_{k',k} \partial_x f_{k'} + \ree \left[-i{\mathcal B}/{2}\right]^{-1}_{k',k} \partial_y f_{k'} - \ree \left(i\left[{\mathcal A}/{2}\right]\left[-i{\mathcal B}/{2}\right]^{-1}\right)_{k',k} f_{k'}.
\end{align*}
Indeed, noting $\overline{\left[i\mathcal B\right]} =I - i \imm {\left[i\mathcal B\right]}$, it is straightforward to verify the following asymptotics from \eqref{eq:expder}
\begin{align}\label{eq:gasymp}
g^1_k(z) &= I_{j,k} Z^1_{-3/2}(z-a_j) - \imm (\overline{\left[i\mathcal B\right]} \left[{\mathcal A}\right]\left[i{\mathcal B}\right]^{-1})_{j,k} Z^i_{-1/2}(z-a_j) + \left( -4m^2 \left[ i\mathcal{B}\right] \overline{\left[ i\mathcal{B}\right]}^{-1}{-3\left[ \mathcal{D}\right]\left[ i\mathcal{B}\right]^{-1}} \right. \\ \nonumber
&\left. + \left[{\mathcal A}\right] \ree \left( \left[{\mathcal A}\right]\left[i{\mathcal B}\right]^{-1} \right) \right)_{j,k}\bullet Z_{1/2}(z-a_j) + O\left(|z-a_j|^{3/2} \right) ;\\ \nonumber
g^i_k(z) &= I_{j,k} Z^i_{-3/2}(z-a_j) - \ree (\overline{\left[i\mathcal B\right]} \left[{\mathcal A}\right]\left[i{\mathcal B}\right]^{-1})_{j,k} Z^i_{-1/2}(z-a_j) +  \left(4im^2 \left[ i\mathcal{B}\right] \overline{\left[ i\mathcal{B}\right]}^{-1}{-3i\left[ \mathcal{D}\right]\left[ i\mathcal{B}\right]^{-1}} \right. \\ \nonumber
&\left. - \left[{\mathcal A}\right] \imm \left( \left[{\mathcal A}\right]\left[i{\mathcal B}\right]^{-1} \right) \right)_{j,k}\bullet Z_{1/2}(z-a_j) + O\left(|z-a_j|^{3/2} \right).
\end{align}
Using \eqref{eq:innerproduct} (and the comment below) on $f_j$ and $g^1_k,g^i_k$, we get ${\left[{\mathcal A}\right]}_{k,j} = (\overline{\left[i\mathcal B\right]} \left[{\mathcal A}\right]\left[i{\mathcal B}\right]^{-1})_{j,k} $. Since $\overline{\left[i\mathcal B\right]}$ is the transpose of ${\left[i\mathcal B\right]}$, we conclude that $\left[{\mathcal A}\right]\left[i{\mathcal B}\right]^{-1}$ is symmetric.

\subsubsection*{Spinors are Differentiable under Deformation}

Thanks to \eqref{eq:res}, differentiability of the spinors $f_k$ in the bulk under deformation to implies that of the coefficients $\mathcal{A}_{j,k},\mathcal{B}_{j,k}, \ldots$. The actual proof will proceed by carefully alternating bulk and coefficient differentiability.

Without loss of generality, we will show differentiability under the deformation in the $x$-direction $a_1 \to a_1 + h$. Define $f^h_j$ to be the increment of $f_j$ under this change: our goal is to show $h^{-1} f_j^h$ as $h\to 0$ converges to a limit $\partial_h f_j$ in the bulk. We will consistently use $h$ superscripts to denote corresponding {increments} when it is clear. We will use a familiar strategy in the field: we will extract a subsequential limit which must be unique.

We will in fact work on double covers $\Cdub^r$ of $\mathbb C^r := \mathbb{C}\setminus B_r(a_1)$ for small $r>0$ by restriction of the original $\Cdub$ to function as a common subdomain, so that we can consider derivatives of the spinors with respect to $a_1$.

\begin{lem}\label{lem:f1}
For each fixed small $r>0$, we have $\iint_{\mathbb C^r} |f^h_1|^2 = O(h^2)$ as $h\to 0$.
\end{lem}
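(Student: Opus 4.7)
The plan is to convert the $L^2$ norm on $\mathbb{C}^r$ into a contour integral on $\partial B_r(a_1)$ via Green--Riemann's identity \eqref{eq:gr}, and then control the resulting boundary integral by exploiting the smooth dependence of $f_1$ on its singular position $a_1$ at distance $r$ from $a_1$ itself.

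\emph{Step 1 (Green--Riemann identity).} On the double cover of $\mathbb{C}^r$, $f_1^h = f_1(\cdot;a_1+h) - f_1(\cdot;a_1)$ is a massive holomorphic spinor regular on $\partial B_r(a_1)$ (both constituents' singularities lie strictly inside $B_r(a_1)$ for $|h|<r$), carrying $O(|z-a_k|^{-1/2})$ singularities with coefficients $i\mathcal{B}^h_{k,1}$ at $a_k$ ($k\geq 2$), and is $L^2$. Applying \eqref{eq:gr} to $(f_1^h)^2$ on $\mathbb{C}^r \setminus \bigcup_{k\geq 2} B_\epsilon(a_k)$ and sending $\epsilon \downarrow 0$---so that each $a_k$, $k\geq 2$, contributes its CCW residue $-2\pi i(\mathcal{B}^h_{k,1})^2$ from $(f_1^h)^2 \sim -(\mathcal{B}^h_{k,1})^2/(z-a_k)$---then taking imaginary parts, yields
\begin{equation*}
4|m|\iint_{\mathbb{C}^r}|f_1^h|^2 + 2\pi\sum_{k\geq 2}(\mathcal{B}^h_{k,1})^2 = \imm \oint_{\partial B_r(a_1)}(f_1^h)^2\,dz \leq 2\pi r\,\|f_1^h\|_{L^\infty(\partial B_r(a_1))}^2,
\end{equation*}
where both terms on the left are nonnegative since $m<0$.

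\emph{Step 2 (boundary sup estimate).} It therefore suffices to show $\|f_1^h\|_{L^\infty(\partial B_r(a_1))} = O(h)$. I would extract the explicit singular part via the formal power, writing $f_1(z;a_1) = Z^1_{-1/2}(z-a_1) + R_1(z;a_1)$ for a bounded massive holomorphic remainder $R_1$ (which vanishes to order $1/2$ at $a_1$). The principal-part shift $Z^1_{-1/2}(z-a_1-h) - Z^1_{-1/2}(z-a_1)$ is $O(h)$ uniformly for $|z-a_1|=r$, directly from the explicit formula \eqref{eq:formalpowers}. The remainder increment $R_1(z;a_1+h) - R_1(z;a_1)$ is to be controlled by a Lipschitz-in-$a_1$ estimate on $\partial B_r(a_1)$, which follows from the Bers--Vekua interior regularity bounds in the Appendix---specifically, by representing $R_1$ near $\partial B_r(a_1)$ via a generalized Cauchy-type integral over an exterior contour whose kernel depends smoothly on $a_1$.

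\emph{Main obstacle.} The hard step is precisely the quantitative $a_1$-Lipschitz dependence in Step 2: mere subsequential continuity of $a_1 \mapsto f_1$ on compact sets away from $a_1$ follows softly from Arzel\`a--Ascoli compactness (as in the proof of Proposition \ref{prop:phys}) combined with uniqueness from Lemma \ref{lem:unique}, but the rate $O(h)$ requires differentiating under an integral representation. I anticipate carrying this out by isolating the $a_1$-dependence in the generalized Cauchy / Bers--Vekua kernel and differentiating under the integral sign, the integrand being regular in $a_1$ at bounded distance from singularities. Once $\|f_1^h\|_{L^\infty(\partial B_r(a_1))} = O(h)$ is secured, Step 1's inequality delivers $\iint_{\mathbb{C}^r}|f_1^h|^2 = O(h^2)$ and, as a useful byproduct, $\mathcal{B}^h_{k,1} = O(h)$ for every $k\geq 2$, which feeds directly into the subsequent differentiability analysis.
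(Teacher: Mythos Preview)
Your Step~1 is correct and matches the paper exactly: Green--Riemann on $\mathbb{C}^{r/2}$ (the paper uses $r/2$ instead of $r$, immaterial) reduces the $L^2$ bound to controlling $\oint_{\partial B_{r/2}(a_1)}(f_1^h)^2\,dz$, with the residues at $a_k$, $k\geq 2$, contributing the nonnegative $(\mathcal{B}^h_{k,1})^2$ terms you found.

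The gap is in Step~2. You propose to bound the remainder increment $R_1(z;a_1+h)-R_1(z;a_1)$ by ``representing $R_1$ near $\partial B_r(a_1)$ via a generalized Cauchy-type integral over an exterior contour whose kernel depends smoothly on $a_1$'' and then differentiating under the integral. But no such representation is available here: the massive Cauchy kernel $Y_{-1}(z-a)$ depends only on the evaluation point $a$, not on the singular location $a_1$, and the Bers--Vekua similarity factor $s$ in Proposition~\ref{prop:similarity} is built from $\overline{f_1}/f_1$, which itself depends on $a_1$ in exactly the unknown way you are trying to quantify. The spinor $f_1(\cdot;a_1)$ is determined \emph{implicitly} by the global conditions of Definition~\ref{defn:spinor}; there is no explicit integral formula in which the $a_1$-dependence sits in a smooth kernel against $a_1$-independent data. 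So the ``differentiate under the integral'' step is circular --- it presupposes the regularity in $a_1$ that the lemma is meant to establish.

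The paper closes this gap by an entirely different, soft argument. It sets $s_h:=h^{-2}\int_{\partial B_{r/2}(a_1)}|f_1^h|^2$ and argues by contradiction: if $s_h\to\infty$ along a subsequence, the renormalization $(\sqrt{s_h}\,h)^{-1}f_1^h$ is $L^2$-bounded on $\mathbb{C}^{r/2}$ by Step~1, so Arzel\`a--Ascoli plus \eqref{eq:mvt} yields a subsequential local uniform limit $g$. A second application of Green--Riemann on the annulus $B_r(a_1)\setminus B_{2h}(a_1)$, after subtracting the explicit $Z^h:=Z^1_{-1/2}(z-a_1-h)-Z^1_{-1/2}(z-a_1)$, extends this convergence across $a_1$ and shows $g\in V_{-1/2}$ with $A^1_{-1/2}(a_j,g)=0$ for all $j$. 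Lemma~\ref{lem:unique} then forces $g\equiv 0$, contradicting the normalization $\int_{\partial B_{r/2}}|(\sqrt{s_h}h)^{-1}f_1^h|^2=1$. The point is that compactness plus uniqueness substitutes for the Lipschitz estimate you were unable to derive directly.
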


\begin{proof}
Applying Green-Riemann's formula as in \eqref{eq:l2} on $\mathbb C^{r/2}$ yields
\begin{equation}\label{eq:l2cont}
2\pi \left( -\oint_{\partial B_{r/2}(a_1) }\left( f^h_1\right)^2 dz +\left| \mathcal{B}^h_{2,1}\right|^2 +\cdots +\left| \mathcal{B}^h_{n,1}\right|^2 \right)=2\iint_{\mathbb C^{r/2}} 2m\left| f^h_1\right|^2,
\end{equation}
so it suffices to show $s_h := h^{-2} \int_{\partial B_{r/2}(a_1) }\left| f^h_1\right|^2$ is bounded. So suppose, possibly along a subsequence as $h\to 0$, $s_h \to \infty$. Still, the renormalized $\left( \sqrt{s_h} h \right)^{-1} f^h_1$ is $L^2$-bounded in $\Cdub^{r/2}$, and so by \eqref{eq:mvt}, we may apply Arzel\`a-Ascoli theorem on compact subsets of $\Cdub^{r/2}$. Choosing an increasing sequence of compact subsets and diagonalizing, we may again extract a subsequence along which $\left( \sqrt{s_h} h \right)^{-1} f^h_1$ locally uniformly converges to a limit $g$ on all of $\Cdub^{r/2}$.

However, the local uniform convergence may be extended to all of $\Cdub$: writing $Z^h(z):=Z^1_{-1/2}(z-(a_1+h)) - Z^1_{-1/2}(z-a_1)$ and applying Green-Riemann's theorem on the annulus $B_r(a_1) \setminus B_{2h} (a_1)$,
\begin{equation*}
2\pi \left( \oint_{\partial B_{r}(a_1) }\left(s_h  h^2 \right)^{-1}\left( f^h_1 - Z^h \right)^2 dz - \oint_{\partial B_{2h}(a_1) }\left(s_h  h^2 \right)^{-1} \left( f^h_1 - Z^h \right)^2 dz\right)=2\iint_{B_r(a_1) \setminus B_{2h} (a_1)} \left(s_h  h^2 \right)^{-1} 2m\left| f^h_1 - Z^h \right|^2.
\end{equation*}
Note that the integral over $\partial B_r(a_1)$ converges to $\oint_{\partial B_r(a_1)} g^2 dz < \infty$ since $Z^h = O(h)$ there. The integral over $\partial B_{2h}(a_1)$ vanishes thanks to \eqref{eq:Aunif}. So the $L^2$ norm of $\left( \sqrt{s_h} h \right)^{-1} (f^h_1 - Z^h) $ is uniformly bounded on the annulus (which exhausts $B_r(a_1)\setminus\{ a_1 \}$), and we may yet again extract a subsequence along which $\left( \sqrt{s_h} h \right)^{-1} (f^h_1 - Z^h) $ locally uniformly converges to $g$ (now extended to all of $\Cdub$; note $\left( \sqrt{s_h} h \right)^{-1} Z^h \to 0$ in the bulk).

The fact that $g$ is in $V_{-1/2}$ may also be verified from the $L^2$ boundedness of $\left( \sqrt{s_h} h \right)^{-1}  f^h_1$ (away from $a_1$) and $\left( \sqrt{s_h} h \right)^{-1} (f^h_1 - Z^h)$ (near $a_1$) using massive Cauchy formula (say, the fact that $g$ is massive holomorphic itself) and \eqref{eq:mvt}. Similarly, one may verify that the coefficients $A^1_{-1/2}(a_j,g)=0$ for all $j>1$ using \eqref{eq:res}. Near $a_1$, one may use the fact that one may deform the contour in \eqref{eq:res}: as $h\to 0$, by \eqref{eq:Aunif},
\begin{align*}
 A^1_{-1/2}(a_1,g) &\leftarrow -\frac{\left(\sqrt{s_h}  h \right)^{-1}}{2\pi}\ree \oint_{\partial B_{r}(a_1) }\left( f^h_1 - Z^h \right)(z)Z^i_{-1/2}(z-a_1) dz\\
 &=-\frac{\left(\sqrt{s_h}  h \right)^{-1}}{2\pi} \ree \oint_{\partial B_{2h}(a_1) }\left( f^h_1 - Z^h \right)(z) Z^i_{-1/2} (z-a_1)dz \to 0.
\end{align*}
Therefore $g\equiv 0$ by Lemma \ref{lem:unique}. This means that $\left(s_h  h^2 \right)^{-1} \int_{\partial B_{r/2}(a_1) }\left| f^h_1\right|^2\to 0$, which contradicts the definition of $s_h$.
\end{proof}
Now that we have an $L^2$ bound in $\mathbb C^r$ for each small $r>0$, we may play the same game as in the proof of the bound and yet again extract a subsequence along which $h^{-1}f^h_1$ converges locally uniformly to some $g$ in $\Cdub$. Note that $h^{-1} Z^h$ tends to $-\partial_x Z^1_{-1/2}(z-a_1) = \frac{1}{2}Z^1_{-3/2}(z-a_1) - 2m^2 Z^1_{1/2}(z-a_1)$. Again by similar arguments, $g(z) + \partial_x Z^1_{-1/2}(z-a_1)$ only has at most $1/2$-order poles at $a_1$ and $g$ has purely imaginary $1/2$-order poles at $a_2,\ldots,a_n$; in fact, by alternatively setting $Z^h(z):=Z^1_{-1/2}(z-(a_1+h)) + \left(\mathcal{A}_{1,1}+\mathcal{A}^h_{1,1}\right)\bullet Z_{1/2}(z-(a_1+h)) - Z^1_{-1/2}(z-a_1) - \mathcal{A}_{1,1}\bullet Z_{1/2}(z-a_1)$, it is easy to see that $A^1_{-1/2}(a_1,g) = -\frac{1}{2} \ree \mathcal{A}_{1,1}$ (at least along the subsequence giving the limit $g$, $\mathcal{A}^h_{1,1} = O(h)$ by \eqref{eq:res}). This fixes $g$ uniquely by Lemma \ref{lem:unique}, and we see that $g =\partial_h f_1$ as desired.

Given differentiability of $f_1$, we may repeat the same arguments starting from Lemma \ref{lem:f1} with other spinors $f_k$. The only difference is that we need to set
\begin{equation*}
Z^h(z):=\left(\mathcal{B}_{1,k} + \mathcal{B}^h_{1,k} \right) Z^i_{-1/2}(z-(a_1+h))  - \mathcal{B}_{1,k} Z^i_{-1/2}(z-a_1),
\end{equation*}
and we need to first demonstrate the limit of $h^{-1}  \mathcal{B}^h_{1,k}$ to carry out the proof. However, this is already done by the existence of $\partial_h f_1$: they are precisely the coefficients $-A^i_{-1/2}(a_k,\partial_h f_1)$ by \eqref{eq:hermitian}. Thus we have shown
\begin{prop}\label{prop:isom}
The Ising spinors $f_1,\ldots f_n$ are differentiable under infinitesimal movements of $a_1,\ldots,a_n$.
\end{prop}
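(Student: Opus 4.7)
The plan is to reduce differentiability in position to a compactness-plus-uniqueness argument, exploiting the fact (Lemma \ref{lem:unique}) that any element of $V_{-3/2}$ is determined by the real parts of its $-1/2$-order poles together with its full $-3/2$-order poles. By the symmetric roles of $a_1,\ldots,a_n$ and of the two real coordinate directions, it suffices to study the displacement $a_1 \to a_1 + h$ for real $h \to 0$, working on the common subdomain $\Cdub^r := \Cdub \setminus B_r(a_1)$ and looking at the increments $f_j^h$. I would follow a familiar two-step scheme: first extract a subsequential bulk limit via Arzel\`a--Ascoli (using the mean-value estimate \eqref{eq:mvt}), then use the residue formula \eqref{eq:res} and Lemma \ref{lem:unique} to pin down the limit uniquely and thereby upgrade subsequential convergence to full convergence.

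Treat $j=1$ first, as that is where the moving singularity lives. The key preliminary estimate is $\iint_{\mathbb C^r} |f_1^h|^2 = O(h^2)$. I would prove this by contradiction: if $s_h := h^{-2}\oint_{\partial B_{r/2}(a_1)} |f_1^h|^2$ were unbounded along a subsequence, then renormalizing $f_1^h$ by $\sqrt{s_h}\,h$ gives an $L^2$-bounded family on $\Cdub^{r/2}$ to which Arzel\`a--Ascoli and diagonalization yield a locally uniform subsequential limit $g$. To extend $g$ across $a_1$ one has to subtract an explicit compensator absorbing the moving pole, namely $Z^h(z) := Z^1_{-1/2}(z-(a_1+h)) - Z^1_{-1/2}(z-a_1)$, and apply Green--Riemann \eqref{eq:gr} on the annulus $B_r(a_1)\setminus B_{2h}(a_1)$ to control $(\sqrt{s_h}\,h)^{-1}(f_1^h - Z^h)$ in $L^2$ there. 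The residue formula \eqref{eq:res}, deformed to $\partial B_{2h}(a_1)$ and combined with the uniform estimate \eqref{eq:Aunif}, shows that all prescribed principal-part coefficients of $g$ vanish, so $g\equiv 0$ by Lemma \ref{lem:unique}, contradicting $s_h\to\infty$ through \eqref{eq:l2cont}.

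Given this bound, I rerun the Arzel\`a--Ascoli / diagonalization scheme on $h^{-1} f_1^h$ itself. The candidate limit $g$ is massive holomorphic in the bulk, purely imaginary $1/2$-order poles at $a_2,\ldots,a_n$, and across $a_1$ one uses the refined compensator $Z^h(z) := Z^1_{-1/2}(z-(a_1+h)) + (\mathcal{A}_{1,1}+\mathcal{A}^h_{1,1})\bullet Z_{1/2}(z-(a_1+h)) - Z^1_{-1/2}(z-a_1) - \mathcal{A}_{1,1}\bullet Z_{1/2}(z-a_1)$, noting $h^{-1} Z^h \to -\partial_x Z^1_{-1/2}(\cdot - a_1) = \tfrac{1}{2}Z^1_{-3/2} - 2m^2 Z^1_{1/2}$ to first order. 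This simultaneously identifies the $-3/2$-order pole of $g$ at $a_1$ and forces $A^1_{-1/2}(a_1,g) = -\tfrac{1}{2}\ree \mathcal{A}_{1,1}$, which by Lemma \ref{lem:unique} determines $g$ uniquely; hence the full net converges, and $\partial_h f_1 := g$ exists.

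For $j \neq 1$ I would repeat the same argument, replacing the compensator by $Z^h(z) := (\mathcal{B}_{1,j}+\mathcal{B}_{1,j}^h)Z^i_{-1/2}(z-(a_1+h)) - \mathcal{B}_{1,j} Z^i_{-1/2}(z-a_1)$, since now it is the subleading $1/2$-order pole of $f_j$ at $a_1$ that moves. This requires knowing $h^{-1}\mathcal{B}^h_{1,j}$ has a limit, which is free from the previous step: by \eqref{eq:res} the limit is $-A^i_{-1/2}(a_j,\partial_h f_1)$, using \eqref{eq:hermitian} to swap indices. The main obstacle throughout is the delicate bookkeeping at the moving singularity: the compensator has to be chosen precise enough that the residue formula identifies all prescribed coefficients of the limit in Lemma \ref{lem:unique}, yet coarse enough that the $L^2$ estimate on the shrinking annulus $B_r(a_1)\setminus B_{2h}(a_1)$ goes through; the uniform bound \eqref{eq:Aunif} on formal powers is what makes both ends meet.
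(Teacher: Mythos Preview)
Your proposal is correct and follows essentially the same approach as the paper: the contradiction argument for Lemma~\ref{lem:f1} via the renormalized family $(\sqrt{s_h}\,h)^{-1}f_1^h$, the annulus Green--Riemann estimate with the compensator $Z^h$, the identification of the limit through Lemma~\ref{lem:unique} using the refined compensator including the $\mathcal{A}_{1,1}$ terms, and the bootstrap to $f_j$ for $j\neq 1$ via $h^{-1}\mathcal{B}_{1,j}^h \to -A^i_{-1/2}(a_j,\partial_h f_1)$ are all exactly as in the paper.
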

\subsection{Derivation of Deformation Equations}\label{subsec:deform}
\subsubsection*{General $n$-point case}
Throughout this section, we will denote by $v=(v_1,\ldots,v_n)\in \mathbb C^n$ the direction of deformation of spin locations $a_1,\ldots,a_n$, and $\mathbf{h}:= vh$ for $h\in \mathbb R$ the corresponding one-parameter family. Under this deformation $\partial_{\mathbf{h}}$, by Proposition \ref{prop:isom} (indeed, it was a crucial building block of its proof) the derivative of a spinor $f_k$ has asymptotic as $z\to a_j$
\begin{align*}
\partial_{\mathbf{h}} f_k &= \frac{\left(iv_j\mathcal{B}_{j,k} \right)}{2}\bullet Z_{-3/2}(z-a_j) + \left(i \partial_\mathbf{h} \mathcal{B}_{j,k} - \frac{v_j \mathcal{A}_{j,k}}{2}\right) \bullet Z_{-1/2}(z-a_j)\\
 &+ \left(\partial_\mathbf{h} \mathcal{A}_{j,k} -\frac{3v_j}{2}\mathcal{D}_{j,k} - 2i m^2 \overline{v_j} \mathcal{B}_{j,k} \right) \bullet Z_{1/2}(z-a_j) + O(|z-a_j|^{3/2}).
\end{align*}
We now identify $\partial_{\mathbf{h}} f_k$ with the corresponding linear combination of $f_k, g^1_k, g^i_k$. Note that $i v_j \mathcal{B}_{j,k}$ is precisely the component of the matrix product $\left[ \diag v \right]\left[ i\mathcal{B} \right] $, and $i \partial_\mathbf{h} \mathcal{B}_{j,k} \in i\mathbb{R}$. So
\begin{equation*}
\partial_{\mathbf{h}} f_k = \sum_{k'=1}^n \frac{1}{2}\left(\left[ \diag v \right]\left[ i\mathcal{B} \right] \right)_{k',k}\bullet g_{k'} - \frac{1}{2}\ree \left(\left[ \diag v \right]\left[ \mathcal{A} \right]\right)_{k',k} f_{k'}.
\end{equation*}
Now, comparing $A^i_{-1/2}$ between the above and the asymptotics \eqref{eq:AC} and \eqref{eq:gasymp}
\begin{equation*}
\left[ i\partial_\mathbf{h} \mathcal{B}\right] - \frac{i}{2}  \imm \left(\left[ \diag v \right]\left[ \mathcal{A} \right]\right)= \frac{i}{2} \imm \left(-\overline{\left[i\mathcal B\right]} \left[{\mathcal A}\right]\left[i{\mathcal B}\right]^{-1}\left[ \diag v \right]\left[ i\mathcal{B} \right]\right)  - \frac{i}{2}\imm \left[ i\mathcal{B} \right]\ree \left(\left[ \diag v \right]\left[ \mathcal{A} \right]\right) .
\end{equation*}
Again since $I -i \imm \left[ i\mathcal{B} \right] = \overline{\left[ i\mathcal{B} \right]}$, we arrive at the following expression
\begin{equation}\label{eq:Bh}
\left[ i\partial_\mathbf{h} \mathcal{B}\right]= \frac{i}{2} \imm \left( \overline{\left[i\mathcal B\right]}\left[ \diag v \right]\left[ \mathcal{A} \right] - \overline{\left[i\mathcal B\right]} \left[{\mathcal A}\right]\left[i{\mathcal B}\right]^{-1}\left[ \diag v \right]\left[ i\mathcal{B} \right]\right)=  \frac{i}{2} \imm \left( \overline{\left[i\mathcal B\right]} \left[\left[ \diag v \right],  \left[{\mathcal A}\right]\left[i{\mathcal B}\right]^{-1} \right] \left[ i\mathcal{B} \right]\right),
\end{equation}
using the matrix commutator $[,]$.

From comparing $A_{1/2}$, we have
\begin{align*}
\left[ \partial_\mathbf{h} \mathcal{A} \right] -\frac{3}{2}\left[ \diag v \right]\left[ \mathcal{D} \right] - 2 m^2 \overline{\left[ \diag v \right]}\left[i \mathcal{B} \right]&= -2m^2 \left[ i\mathcal{B}\right] \overline{\left[ i\mathcal{B}\right]}^{-1}\overline{\left[ \diag v \right]\left[ i\mathcal{B} \right]}{-\frac32\left[ \mathcal{D}\right]\left[ i\mathcal{B}\right]^{-1}}\left[ \diag v \right]\left[ i\mathcal{B} \right]\\
&+ \frac{\left[{\mathcal A}\right]}{2} \ree \left( \left[{\mathcal A}\right]\left[i{\mathcal B}\right]^{-1} \left[ \diag v \right]\left[ i\mathcal{B} \right]\right) - \frac{\left[ \mathcal{A} \right] }{2} \ree \left(\left[ \diag v \right]\left[ \mathcal{A} \right]\right),
\end{align*}
or
\begin{align*}
\left[ \partial_\mathbf{h} \mathcal{A} \right] &= \frac{3}{2}\left[\left[ \diag v \right], \left[ \mathcal{D} \right]\left[i \mathcal{B} \right]^{-1} \right]\left[i \mathcal{B} \right] + 2 m^2 \overline{\left[ \diag v \right]}\left[i \mathcal{B} \right]\\ 
& -2m^2 \left[ i\mathcal{B}\right] \overline{\left[ i\mathcal{B}\right]}^{-1}\overline{\left[ \diag v \right]\left[ i\mathcal{B} \right]}
+ \frac{\left[{\mathcal A}\right]}{2} \ree \left( \left[ \left[{\mathcal A}\right]\left[i{\mathcal B}\right]^{-1} ,\left[ \diag v \right]\right] \left[ i\mathcal{B} \right]\right) .
\end{align*}

Unlike \eqref{eq:Bh}, this equation has terms involving $\mathcal{D}$ which we would like to eliminate. This can be done using rotational symmetry of the spinors: as can be checked directly, spinors maintain massive holomorphicity under rotation $f^\phi_k(z) := f_k(e^{-i\phi}z)e^{-i\phi/2}$. The vector $v$ corresponding to this rotation is precisely $ia := (ia_1, \ldots, ia_n)$, and we may check directly from \eqref{eq:formalpowers} that $\partial_\phi \mathcal{A} = -i\mathcal{A}$. Plugging in the above, we have
\begin{align*}
\left[ -i \mathcal{A} \right] &= \frac{3}{2}\left[\left[ \diag ia \right], \left[ \mathcal{D} \right]\left[i \mathcal{B} \right]^{-1} \right]\left[i \mathcal{B} \right] + 2 m^2 \overline{\left[ \diag ia \right]}\left[i \mathcal{B} \right] \\
&-2m^2 \left[ i\mathcal{B}\right] \overline{\left[ i\mathcal{B}\right]}^{-1}\overline{\left[ \diag ia \right]\left[ i\mathcal{B} \right]}
+ \frac{\left[{\mathcal A}\right]}{2} \ree \left( \left[ \left[{\mathcal A}\right]\left[i{\mathcal B}\right]^{-1} ,\left[ \diag ia \right]\right] \left[ i\mathcal{B} \right]\right).
\end{align*}
In the same deformation we have $i\partial_\phi \mathcal{B} = 0$, so incorporating \eqref{eq:Bh} we may write (again using $\overline{\left[i\mathcal B\right]} = I - i \imm {\left[i\mathcal B\right]}$)
\begin{align}\label{eq:abdiag}
\left[  \mathcal{A} \right]\left[i{\mathcal B}\right]^{-1} &=- \frac{3}{2}\left[\left[ \diag a \right], \left[ \mathcal{D} \right]\left[i \mathcal{B} \right]^{-1} \right] + 2 m^2 \overline{\left[ \diag a \right]}\\ \nonumber
&-2m^2 \left[ i\mathcal{B}\right] \overline{\left[ i\mathcal{B}\right]}^{-1}\overline{\left[ \diag a \right]\left[ i\mathcal{B} \right]}\left[i{\mathcal B}\right]^{-1}
+ \frac{\left[{\mathcal A}\right]\left[i{\mathcal B}\right]^{-1}}{2}  \left[ \left[ \diag a \right],  \left[{\mathcal A}\right]\left[i{\mathcal B}\right]^{-1}\right] .
\end{align}
Note carefully that this equation in fact specifies $\left[ \left[ \diag v \right], \left[ \mathcal{D} \right]\left[i \mathcal{B} \right]^{-1} \right]$ and therefore already produces a closed system of equations for $\left[ \partial_\mathbf{h} \mathcal{A} \right]$ as desired: indeed, commutating a matrix by $\left[ \diag a \right]$ simply multiplies the $(j,k)$-matrix entry by $(a_j - a_k)$, so it suffices to multiply each off-diagonal entry of $\left[ \left[ \diag a \right], \left[ \mathcal{D} \right]\left[i \mathcal{B} \right]^{-1} \right]$  by $\frac{v_j - v_k}{a_j - a_k}$ to obtain the desired commutator with $ \left[ \diag v \right]$.

This step can be essentially written using the Jacobi identity on the commutators and the fact that diagonal matrices commute. We will pursue the variation of rotationally invariant $\left[ \left[ \diag a \right], \left[ \mathcal{A} \right]\left[i \mathcal{B} \right]^{-1} \right]$  \cite[Theorem 6.5.3]{palmer} (this kills the diagonal elements of $\left[\mathcal{A} \right]\left[i \mathcal{B} \right]^{-1} $, but they can be recovered from \eqref{eq:abdiag}). Similarly to above, it is easy to verify
\begin{align*}
&\partial_\mathbf{h}\left( \left[ \mathcal{A} \right] \left[i \mathcal{B} \right]^{-1} \right) = \left[ \partial_\mathbf{h} \mathcal{A} \right] \left[i \mathcal{B} \right]^{-1} -\left[  \mathcal{A} \right] \left[i\mathcal{B} \right]^{-1} \left[i \partial_\mathbf{h}\mathcal{B} \right]\left[i\mathcal{B} \right]^{-1} =\\
  \frac{3}{2}&\left[\left[ \diag v \right], \left[ \mathcal{D} \right]\left[i \mathcal{B} \right]^{-1} \right] 
+ 2 m^2 \overline{\left[ \diag v \right]} -2m^2 \left[ i\mathcal{B}\right] \overline{\left[ i\mathcal{B}\right]}^{-1}\overline{\left[ \diag v \right]\left[ i\mathcal{B} \right]}{\left[ i\mathcal{B}\right]}^{-1}
-\frac{1}{2} \left[  \mathcal{A} \right] \left[i\mathcal{B} \right]^{-1} \left[\left[ \diag v \right],  \left[{\mathcal A}\right]\left[i{\mathcal B}\right]^{-1} \right],
\end{align*}
and finally
\begin{align}\label{eq:Ah}
&\partial_\mathbf{h}\left[ \left[ \diag a \right], \left[ \mathcal{A} \right]\left[i \mathcal{B} \right]^{-1} \right]= \left[ \left[ \diag v \right], \left[ \mathcal{A} \right]\left[i \mathcal{B} \right]^{-1} \right]+\left[ \left[ \diag a \right], \partial_\mathbf{h} \left[ \mathcal{A} \right]\left[i \mathcal{B} \right]^{-1} \right] = \\ \nonumber
 -&2m^2\left[\left[ \diag v \right], \left[ \diag a \right]^*\right]
 -2m^2 \left[\left[ \diag a \right], \left[ \diag v \right]^*\right] + \frac{1}{2} \left[ \left[\left[ \diag v \right],  \left[{\mathcal A}\right]\left[i{\mathcal B}\right]^{-1} \right],  \left[\left[ \diag a \right],  \left[{\mathcal A}\right]\left[i{\mathcal B}\right]^{-1} \right]\right],
\end{align}
where $\left[ \diag a \right]^*:=\left[ i\mathcal{B}\right] \overline{\left[ i\mathcal{B}\right]}^{-1}\overline{\left[ \diag a \right]\left[ i\mathcal{B} \right]}{\left[ i\mathcal{B}\right]}^{-1}$, etc.

Note that \eqref{eq:Ah} yields easily that $\left[ \left[ \diag a \right], \left[ \mathcal{A} \right]\left[i \mathcal{B} \right]^{-1} \right]$ is rotationally invariant by setting $v = ia$. Recall rotational invariance $\partial_\phi \left[i \mathcal{B}\right] = 0$ was shown directly from rotation of spinors $f_j$.

\subsubsection*{$2$-point case and the Painlev\'e transcendant}

Now we reduce to the $n=2$ case. We may write (recall $\left[ \mathcal{A} \right]\left[i \mathcal{B} \right]^{-1}$ is symmetric thanks to \eqref{eq:gasymp})
\begin{equation*}
\left[i\mathcal{B}\right]=
\begin{pmatrix}
1 & -i\beta \\
i\beta & 1
\end{pmatrix}
;\;
\left[ \left[ \diag a \right], \left[ \mathcal{A} \right]\left[i \mathcal{B} \right]^{-1} \right] =
\begin{pmatrix}
0 &  -\gamma\\
\gamma & 0
\end{pmatrix}.
\end{equation*}
These are rotationally and translationally invariant quantities; therefore, we may write $\beta, \gamma = \beta(r), \gamma(r)$ for $r = |a_1 - a_2|$, and study their derivative with respect to $r$, which corresponds to setting $v = a/r$.

Equations \eqref{eq:Bh} and \eqref{eq:Ah} become
\begin{equation*}
\beta' = \frac{1-\beta^2}{r} \imm  \gamma ,\; \gamma' = 4m^2r\frac{2i\beta(1+\beta^2)}{(1-\beta^2)^2}\mbox{; or }\left(\frac{r\beta'}{1-\beta^2} \right)' = \frac{4m^2r\beta(1+\beta^2)}{(1-\beta^2)^2}.
\end{equation*}
Set $\tanh h := \beta$ such that $h' = \beta'/(1-\beta^2)$, and $\eta := \exp(-2h)$, we finally derive the Painlev\'e III equation
\begin{equation} \label{eq:painlevefinal}
rh'' + h' = {m^2 r}\sinh 4h\mbox{, or }
\eta'' = \eta^{-1}(\eta')^2 - r^{-1}\eta'+m^2 (\eta^3 - \eta^{-1}).
\end{equation}

Notice that $\gamma'$ is purely imaginary and $\gamma$ is zero at infinity by \eqref{eq:A}, so $\gamma$ is purely imaginary. Recovering diagonal elements from \eqref{eq:abdiag}, we have
\begin{equation}\label{eq:betafinal}
\left[ \mathcal{A} \right]\left[i \mathcal{B} \right]^{-1} =
\begin{pmatrix}
 \frac{16m^2\beta^{2}-(\beta')^2}{2(1-\beta^2)^2}(\bar{a_2}-\bar{a_1}) &  \frac{\beta'ri}{(1-\beta^2)(a_2-a_1)}\\
\frac{\beta'ri}{(1-\beta^2)(a_2-a_1)} &  \frac{16m^2\beta^{2}-(\beta')^2}{2(1-\beta^2)^2}(\bar{a_1}-\bar{a_2})
\end{pmatrix}.
\end{equation}

Now we give an estimate at infinity of $\beta$, equivalently that of $\eta$. It must be mentioned that there is an extensive literature on analysis of Painlev\'e transcendents through isomonodromy (see e.g. \cite{fokas}); we include a simple derivation for the benefit of an interested reader since we were not able to find it in Ising-specific isomonodromy literature.

Without loss of generality, assume $a_1 = 0$ and $a_2 = r$. We will calculate $\beta = -\mathcal{B}_{1, 2}$ using \eqref{eq:res}, integrating $f_1$ against $Z^1_{-1/2}(z-a_2)$ along a half-moon shaped contour: $\left(\partial B_D \cap \{ \ree z>0 \}  \right) \cup [Di, -Di]$ for large $D>2r$, approaching the imaginary axis from the right side. Applying \eqref{eq:expo} on $f_1(z)$ and using the explicit identification $Z^1_{-1/2}(re^{i\theta})= \frac{{2\Gamma(1/2)}{|m|^{1/2}}}{\pi} e^{-i\theta/2}K_{1/2}(2|m|r)$ (\eqref{eq:formalpowers}, \cite[(10.27.2)]{dlmf}), we can bound the contribution of the integral along the half circle by $O(D^{1/2}\cdot e^{-5|m|(D-r)/2})$. So we are concerned about
\begin{equation*}
-\frac{1}{2\pi}\ree\int_{[-Di, Di]} f_1(z) Z^1_{-1/2}(z-a_2) dz =O(r e^{-9|m|r/4}+De^{ -9|m|r/4}) -\frac{1}{2\pi}\ree \int_{[-Di, Di]}  Z^1_{-1/2}(z)Z^1_{-1/2}(z-a_2) dz.
\end{equation*}
where we replaced $f_1(z)$ in $[ri/2, -ri/2]$ by $Z^1_{-1/2}(z)$ using \eqref{eq:max} and estimated the rest using \eqref{eq:expo} (we will comment on the choice of sheet later). Notice that none of the functions have a monodromy or a pole within the slit half-moon $\left(\partial B_D \cap \{ \ree z>0 \}  \right) \setminus [r, D]$: so \eqref{eq:gr} gives
\begin{align*}
-\frac{1}{2\pi}\ree \int_{[-Di, Di]}  Z^1_{-1/2}(z)Z^1_{-1/2}&(z-a_2) dz =  O(e^{-4|m|(D-r)})  +\frac{1}{\pi}\ree\int_{[r, D]}  Z^1_{-1/2}(z)Z^1_{-1/2}(z-a_2) dz \\
&=O(e^{-4|m|(D-r)})\pm \frac{1}{\pi}\ree\int_{[r, D]}  \left(\frac{2\Gamma(1/2)|m|^{1/2}}{\pi}\right)^2 K_{1/2}(2|m|x)K_{1/2}(2|m|(x-r)) dz\\
&=O(e^{-4|m|(D-r)})\pm \frac{1}{\pi}\ree \int_r^{\infty} \left(\frac{2\Gamma(1/2)|m|^{1/2}}{\pi}\right)^2 K_{1/2}(2|m|x)K_{1/2}(2|m|(x-r)) dx,
\end{align*}
where two integrals above and below the slit double up. Miraculously, the last integral may be calculated explicitly: $K_{1/2}(x)=\sqrt{\frac{\pi}{2x}}e^{-x}$ \cite[(10.39.2)]{dlmf}, so by \cite[(10.32.8)]{dlmf}
\begin{equation*}
\int_r^{\infty} K_{1/2}(2|m|x)K_{1/2}(2|m|(x-r)) |m|dx=\frac{\pi}{4}\int_r^{\infty} \frac{e^{-4|m|(x-r/2)}}{\sqrt{x(x-r)}} dx=\frac{K_0(2|m|r)}{4}.
\end{equation*}
So (setting $D=2r$), $|\beta| = \frac{ K_0(2|m|r)}{\pi}+O(e^{-17|m|r/8})$. Since we may choose $\beta>0$ ($h>0, \eta<1$), we have
\begin{equation}\label{eq:asympto}
\eta =1- \frac{ 2}{\pi}K_0(2|m|r)+O(e^{-17|m|r/8}).
\end{equation}

\subsection*{Acknowledgements}

The author is supported by a KIAS Individual Grant (MG077202) at Korea Institute for
Advanced Study. This work was initiated during the author's PhD thesis at \'Ecole Polytechnique F\'ed\'erale de Lausanne, where he was supported by ERC grant SG CONSTAMIS, and he thanks the crucial guidance of his advisor Cl\'ement
Hongler. The author also thanks Dmitry Chelkak for inspiring conversations and R\'emy Mahfouf for helpful discussions.

\appendix

\section{Quantitative Estimates}

Here we give estimates of massive holomorphic functions, incorporating the theory of generalized analytic functions (e.g. functions satisfying a \emph{Vekua equation} $\bar{\partial}f = \alpha \bar{f}$ for general functions $\alpha$). The development of this theory goes back to mid-20th century \cite{bers, vek}, and the recently published \cite{baratchart} provides a clear account. While we find this particular theory rather well-suited for our purposes, one could conceivably appeal to alternative forms of elliptic regularity.

\subsubsection*{Bers-Vekua Theory}

The main tool we utilize from Bers-Vekua theory is the following \emph{similarity principle}:
\begin{prop}\label{prop:similarity}
Given a massive holomorphic function or spinor $f$ defined on (possibly punctured) $B_R \subset \mathbb C$, there is a (unique up to a real additive scalar) H\"older continuous function $s$ taking purely real values on $\partial B_R$ such that $e^{-s} f$ is holomorphic at same points where $f$ is massive holomorphic.

If we fix the additive constant so that $\int_{\partial B_R} s =0 $, we have for a universal constant $|s|\leq \const |m|R$.
\end{prop}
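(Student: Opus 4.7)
The plan is to realize $s$ as the solution of a $\overline{\partial}$-problem with bounded data. Define
\[
\alpha(z) := \begin{cases} m\overline{f(z)}/f(z) & f(z)\neq 0, \\ 0 & f(z)=0, \end{cases}
\]
and note that even when $f$ is a spinor with $-1$ monodromy around a puncture, the ratio $\bar{f}/f$ has trivial monodromy, so $\alpha$ is a well-defined measurable function on $B_R$ bounded a.e.\ by $|m|$. A direct computation gives $\overline{\partial}(e^{-s}f) = e^{-s}(m\bar{f} - f\,\overline{\partial}s)$, which vanishes identically (even at zeros of $f$) provided $\overline{\partial}s = \alpha$ on $B_R$. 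Hence constructing $s$ reduces to solving this inhomogeneous $\overline{\partial}$-equation subject to the boundary condition $\imm s|_{\partial B_R} = 0$.

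First I would write $s = T\alpha + H$, where $T\alpha(z) := -\frac{1}{\pi}\iint_{B_R} \frac{\alpha(w)}{w - z}\,dA(w)$ is the Cauchy-Pompeiu transform (satisfying $\overline{\partial}(T\alpha) = \alpha$ distributionally), and $H$ is a holomorphic function on $B_R$ chosen so that $s$ is real on $\partial B_R$. Since $\imm H$ is harmonic with boundary values $-\imm(T\alpha)|_{\partial B_R}$, I would solve the corresponding Dirichlet problem via the Poisson integral and take $\ree H$ to be a harmonic conjugate, unique up to a real additive constant; the latter is then pinned down by the normalization $\int_{\partial B_R} s = 0$.

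The required regularity and quantitative bounds follow from classical estimates on these two transforms. Boundedness of $\alpha$ gives $|T\alpha(z)| \leq \frac{|m|}{\pi}\iint_{B_R} \frac{dA(w)}{|w-z|} \leq 4|m|R$ uniformly on $\overline{B}_R$, and the maximum principle together with the normalization extends the same control (up to a multiplicative constant) to $H$, yielding $|s| \leq \const |m|R$. H\"older continuity is likewise standard: $T\alpha$ lies in every $C^\gamma(\overline{B}_R)$ with norm controlled by $\|\alpha\|_\infty R^{1-\gamma}$, and Poisson extension of H\"older boundary data inherits the same exponent.

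Uniqueness up to a real additive constant is immediate: any two candidates $s_1, s_2$ satisfy $\overline{\partial}(s_1 - s_2) = 0$, so $s_1 - s_2$ is holomorphic on $B_R$; since it is real on $\partial B_R$, its harmonic imaginary part vanishes identically and $s_1 - s_2$ is a real constant. The only genuine subtlety, in my view, is the punctured/spinor setting: the observation that $\alpha$ remains bounded and single-valued regardless of the monodromy or singularities of $f$ is what makes the same strategy go through without modification, and is the main point one should verify with care.
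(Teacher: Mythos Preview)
Your approach is essentially the paper's: both construct $s$ by integrating $\alpha=m\bar f/f$ against the Cauchy kernel, and both emphasize that $\alpha$ remains bounded and single-valued through poles and $-1$ monodromies. The paper simply rescales to $B_1$ (so the datum becomes $mR$), cites the similarity principle from the Bers--Vekua literature, and invokes Sobolev embedding for H\"older regularity; you spell out the same construction explicitly via $s=T\alpha+H$.

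One step deserves tightening. You write that ``the maximum principle together with the normalization extends the same control to $H$''. The maximum principle bounds $\imm H$ by $\sup_{\partial B_R}|\imm(T\alpha)|$, but passing to $\ree H$ is the conjugate-function (Hilbert transform) operation on the circle, which is \emph{not} bounded on $L^\infty$; a sup bound on $\imm H$ alone does not yield a sup bound on $\ree H$. The fix is already implicit in your next sentence: since $\alpha\in L^\infty\subset L^p$ for all $p$, the transform $T\alpha$ lies in $C^\gamma(\overline{B}_R)$ with norm $\lesssim |m|R$ for any $\gamma<1$, and the conjugate-function operator \emph{is} bounded on H\"older classes (Privalov). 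Invoking this, rather than the maximum principle, gives $\|H\|_{C^\gamma}\lesssim |m|R$ and hence the claimed uniform bound on $s$. With that correction the argument is complete and matches the paper's.
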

\begin{proof}
First, notice that defining $f_R(z) := f(Rz)$, we may work with the fixed domain $B_1$ with mass $mR$. Then the proposition follows from \cite[Lemma 3.1]{baratchart} and Sobolev inequality. Note, since $mR$ is in every $L^p$ space, we get $\alpha$-H\"older regularity for any $\alpha\in (0,1)$.

This result relies on integrating $\overline{f_R}/f_R$ against the Cauchy kernel, so it is valid in the presence of poles or $-1$ monodromies.
\end{proof}

As a simple consequence, we get Proposition \ref{prop:laurent}.
\begin{proof}[Proof of Proposition \ref{prop:laurent}]
by assumption and above, $f = e^{s}\cdot e^{-s}f$ with $e^{-s}f$ being a holomorphic spinor of $O(|z-a|)^\nu$ order around $a$. Therefore there is a holomorphic function $g$ such that $e^{-s(z)}f(z) = (z-a)^\nu g(z)$.  We may set $A_\nu := e^{s(a)}g(a)$ and thanks to the H\"older regularity of $s$ we get $f(z) = A_\nu (z-a)^\nu + o(|z-a|^\nu)$. The second estimate follows then similarly by noticing that the left hand side is a massive holomorphic spinor of $o(|z-a|^\nu)$ order around $a$.
\end{proof}

\subsubsection*{Massive Cauchy Formula and Estimates}

Recall the Cauchy-type residue formula \eqref{eq:res}. Applying the same rationale around a bulk point $a$ yields a Cauchy-type integral formula for $f(a)$. Since the poles of modified Bessel functions of the first kind $I_{-\nu}$ at $0$ degenerates for negative integer indices, one must replace it with the modified Bessel functions of the \emph{second kind} $K_{\nu}$. Namely, one may construct the massive versions $Y^1_{-1},Y^i_{-1}$ of the Cauchy kernel $1/z$; we will give a general formula which works for negative integers or half-integers $-\nu$, which may serve as alternatives to the $Z^1_{-\nu},Z^i_{-\nu}$ so that they decay exponentially at infinity. Define now $X_{-\nu}(z=re^{i\theta})= e^{-i\nu \theta}K_{\nu}(2|m|r)$, and
\begin{equation*}
Y^1_{-\nu}(z) := \frac{2|m|^\nu}{\Gamma(\nu)}\left(X_{-\nu}(z) -(\sgn m)X_{-\nu+1}(z)  \right) ;\; Y^i_{-\nu}(z) := \frac{2|m|^\nu}{\Gamma(\nu)}\left(X_{-\nu}(z)  + (\sgn m) X_{-\nu+1}(z)  \right).
\end{equation*}
Massive holomorphicity of $Y_{-\nu}$ may be verified as in \eqref{eq:formalder} for $Z_\nu$. Replacing both $Z_\nu$ with $Y_{-1}$ in \eqref{eq:res} lets us recover respectively the real and imaginary parts of $f(a)$. Note both $Y_{-1}(z)$ are $O(|z|^{-1})$ and both $\nabla Y_{-1}(z)$ are $O(|z|^{-2})$ at small scales \cite[§10.29, §10.30]{dlmf}, and all are $O(|z|^{-1/2}e^{-2|m||z|})$ at large scales \cite[§10.25]{dlmf}.

At small scales, we have, with universal constants,
\begin{equation}\label{eq:mvt}
|f(0)| \leq  \frac{\const}{R^2} \iint_{B_{R}} |f| \leq \frac{\const}{R} \left[ \iint_{B_{R}} |f|^2\right]^{1/2} \mbox{ and } |\nabla f(0)| \leq \frac{\const }{R^3} \iint_{B_{R}} |f|\leq  \frac{\const}{R^2} \left[ \iint_{B_{R}} |f|^2\right]^{1/2}
\end{equation}
as long as $f$ is massive holomorphic in $B_{R}$ for $R<1$. This estimate in terms of area integrals come from, say, radially integrating the Cauchy-type integral on concentric circles.

Now we give estimates we need specifically for the Ising spinors: recall $m<0$ is assumed.

\begin{lem}
Suppose the ball $B_{R}(a_j)$, $R<1$, contains no other $a_{k'}$. Then
\begin{align}\label{eq:Aunif}
\left| f_k(z) -  \left( i\mathcal{B}_{j,k} \right) \bullet Z_{-1/2}(z-a_j) \right| \leq \const |z-a_j|^{1/2}\mbox{ in }z\in B_{R}(a_j)
\end{align}
for constant only dependent on $m$ and $R$.
\end{lem}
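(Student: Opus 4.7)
The plan is to study the residual
\[ g(z) := f_k(z) - (i\mathcal{B}_{j,k}) \bullet Z_{-1/2}(z-a_j), \]
which is massive holomorphic on $B_R(a_j) \setminus \{a_j\}$ and carries the $-1$ monodromy around $a_j$. Conditions (2)--(3) of Definition \ref{defn:spinor} (with the convention $i\mathcal{B}_{j,j}=1$) match the leading $Z_{-1/2}$ coefficient of $f_k$ at $a_j$, so by Proposition \ref{prop:laurent} the $-1/2$-order coefficient of $g$ vanishes; iterating Proposition \ref{prop:laurent} once more yields $g(z) = O(|z-a_j|^{1/2})$ pointwise as $z \to a_j$. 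The task is then to upgrade this pointwise estimate to a uniform one on all of $B_R(a_j)$.

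Next I would invoke Proposition \ref{prop:similarity} on $B_R(a_j)$ to write $g = e^{s} h$, where $h$ is a holomorphic spinor (with the same $-1$ monodromy at $a_j$) and $|s| \leq \const\, |m| R$. Since $e^{\pm s}$ is then bounded in terms of $m, R$ only, the estimate $h(z) = O(|z-a_j|^{1/2})$ persists, and standard complex analysis lets us factor $h(z) = \sqrt{z-a_j}\cdot \tilde h(z)$ with $\tilde h$ ordinary-holomorphic (non-branching) on $B_R(a_j)$. The lemma thus reduces to a uniform bound on $\tilde h$.

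A uniform bound $|\tilde h| \leq C(m,R)$ on the subdisk $B_{R/2}(a_j)$ follows from the maximum principle: it suffices to bound $|\tilde h| = |h|/(R/2)^{1/2}$ on $\partial B_{R/2}(a_j)$, and equivalently $|g|$ there up to the bounded twist $e^{|s|}$. For any $w \in \partial B_{R/2}(a_j)$, the ball $B_{R/4}(w) \subset B_R(a_j) \setminus \{a_j\}$ avoids all singularities, so \eqref{eq:mvt} gives $|g(w)| \leq \const\cdot R^{-1} \|g\|_{L^2(B_R(a_j))}$. The latter norm is controlled by $\|f_k\|_{L^2(\mathbb{C})}$ (which is at most $\sqrt{\pi/(2|m|)}$ by Definition \ref{defn:spinor}) and the $L^2$ norm of $(i\mathcal{B}_{j,k})\bullet Z_{-1/2}(\,\cdot\,-a_j)$ on $B_R(a_j)$, which is finite since $|\mathcal{B}_{j,k}|\leq 1$ and $|Z_{-1/2}|^2$ has only an integrable $1/|z|$ singularity in two dimensions; everything depends only on $m, R$.

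Finally, for $|z - a_j| \leq R/2$ one has $|g(z)| \leq e^{|s|} |h(z)| \leq C(m,R)\, |z-a_j|^{1/2}$, while for $R/2 < |z-a_j| \leq R$, applying \eqref{eq:mvt} directly on a small ball around $z$ bounds $|g(z)|$ by a constant depending on $m,R$, and this is trivially $\leq C\cdot (R/2)^{-1/2}\,|z-a_j|^{1/2}$. Combining the two regions gives the claim. The main subtlety is the clean invocation of the similarity principle in the presence of the half-integer monodromy and the subsequent factoring out of $\sqrt{z-a_j}$ to pass to a single-valued holomorphic function; the remainder is routine bookkeeping of the constants coming from Propositions \ref{prop:similarity} and \eqref{eq:mvt}.
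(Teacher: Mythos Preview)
Your proof is correct and follows essentially the same approach as the paper: both establish an $L^2$ bound on the residual (from the global $L^2$ bound on $f_k$ and $|\mathcal{B}_{j,k}|\le 1$), apply the similarity principle (Proposition \ref{prop:similarity}) to reduce to a genuinely holomorphic spinor on $B_R(a_j)$, and then finish with a Cauchy/maximum-principle estimate. The only cosmetic difference is that the paper squares the holomorphic spinor to obtain a single-valued holomorphic function vanishing at $a_j$ with controlled $L^1$ norm, whereas you factor out $\sqrt{z-a_j}$ to obtain your $\tilde h$; these are equivalent maneuvers.
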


\begin{proof}
Notice $f_k -  \left( i\mathcal{B}_{j,k} \right) \bullet Z_{-1/2}(z-a_j) $ is $L^2$ bounded in $B_{R}(a_j)$ by constant only depending on $m$: indeed, both $f_k$ and $ \left( i\mathcal{B}_{j,k} \right) \bullet Z_{-1/2}(z-a_j) $ are (recall $\left|\mathcal{B}_{j,k}\right| \leq 1$). Then applying Proposition \ref{prop:similarity}, we see that
\begin{align*}
f_k -  \left( i\mathcal{B}_{j,k} \right) \bullet Z_{-1/2}(z-a_j)  = e^{s}g,
\end{align*}
where $g$ is a holomorphic spinor in $B_{R}(a_j)$ with asymptotic $O(|z-a_j|^{1/2})$ around $a_j$ and $|s|\leq \const mR$. Therefore, $g^2$ is a holomorphic function vanishing at $a_j$ and whose $L^1$ norm is bounded by a constant depending on $m, R$. Then by Cauchy's formula or otherwise, \eqref{eq:Aunif} follows.
\end{proof}
 
At large scales, we have exponential dropoff. If $B_R$ for $R>1$ contains no $a_j$'s, then for constants only depending on $m$
\begin{equation}\label{eq:expo}
|f_k(0)| \leq  \frac{\const e^{-3|m|R/4}}{R^{1/2}} \iint_{B_{R}} |f_k| \leq {\const e^{-3|m|R/4}}{R^{1/2}} \left[ \iint_{B_{R}} |f|^2\right]^{1/2} \leq {\const e^{-|m|R/2}}.
\end{equation}

It is useful to recall that massive holomorphic functions still satisfy maximum modulus principle: indeed, one may verify $\Delta |f|^2=4\partial\bar\partial |f|^2 = 8m^2|f|^2 + 2|\nabla\ree f|^2+2|\nabla \imm f|^2\geq 0$, so there cannot be a strict minimum of $|f|$ in the bulk. Therefore, if $B_R(a_k)$, $R>1$, contains no other $a_{j'}$, we still have
\begin{equation}\label{eq:max}
\left| f_k(z) -   Z^1_{-1/2}(z-a_k) \right| \leq  {\const e^{-|m|R/4}}\mbox{ in }z\in B_{R/2}(a_k),
\end{equation}
since $Z^1_{-1/2}(re^{i\theta})= \frac{{2\Gamma(1/2)}{|m|^{1/2}}}{\pi} e^{-i\theta/2}K_{1/2}(2|m|r)$ satisfies the same bound on $\partial B_{R/2}$ and the maximum modulus principle applies inside.

If $B_R(a_j)$, $R>1$, for general $j$ contains no other $a_{j'}$, similarly
\begin{align}\label{eq:A}
|\mathcal{A}_{j,k}| \leq {\const R^{-1/2}e^{-5|m|R/4}},
\end{align}
estimating $\mathcal{A}_{j,k}$ through \eqref{eq:res} but instead integrating against $Y^1_{-3/2}, Y^i_{-3/2}$ (recall both are $O(|z|^{-1/2}e^{-2|m||z|})$), on $\partial B_{R/2}(a_j)$.


\begin{thebibliography}{10}
\bibitem{aumann}S. Aumann, \emph{Singularity of Full Scaling Limits
of Planar Nearcritical Percolation}, Stoch. Proc. Appl. 124(11):3807-3818
(2014)

\bibitem{baratchart}L. Baratchart, A. Borichev, S. Chaabi, \emph{Pseudo-holomorphic
functions at the critical exponent}, J. Eur. Math. Soc. 18(9):1919--1960
(2016)

\bibitem{bax78}R. Baxter, \emph{Solvable eight-vertex model on an
arbitrary planar lattice}. Philosophical Transactions of the Royal
Society of London A: Mathematical, Physical and Engineering Sciences,
289(1359):315--346 (1978)

\bibitem{bax86}R. Baxter, \emph{Free-fermion, checkerboard and Z-invariant lattice models in statistical mechanics}, Proc. Roy. Soc. London Ser. A, 404(1826):1–33 (1986)

\bibitem{bedc}V. Beffara, H. Duminil-Copin, \emph{Smirnov's Fermionic
Observable away from Criticality}, Ann. Probab., 40(6):2667--2689
(2012)

\bibitem{BPZ}A. A. Belavin, A. M. Polyakov, A.B. Zamolodchikov, \emph{Infinite
conformal symmetry in two-dimensional quantum field theory. }Nucl.
Phys. B 333-380 (1984)

\bibitem{bers}L. Bers, \emph{An outline of the theory of pseudoanalytic
functions}. Bull. Amer. Math. Soc. 62(4):291-331 (1956)

\bibitem{bdtr}C. Boutillier, B. de Tilière, K. Raschel, \emph{The
Z-invariant massive Laplacian on isoradial graphs}, Invent. Math.
208(1):109-189 (2017)

\bibitem{BdTR19}C. Boutillier, B. de Tilière, K. Raschel, \emph{The
Z-invariant Ising model via dimers}, Probab. Theory Related Fields.
174(1-2): 235-305 (2019)

\bibitem{cardy-i}J. Cardy, \emph{Conformal invariance and Surface
Critical Behavior}, Nucl. Phys. B 240:514-532 (1984)

\bibitem{chelkak-ems}D. Chelkak, \emph{2D Ising model: correlation functions at criticality via Riemann-type
boundary value problems}, In European Congress of Mathematics: Berlin, 18-22 July,
2016, 235-256. European Mathematical Society, Z\"urich (2018)

\bibitem{Che18}D. Chelkak, \emph{Planar Ising model at criticality:
state-of-the-art and perspectives}, Proceedings of ICM2018 (2018)

\bibitem{s-emb}D. Chelkak, \emph{Ising Model and S-Embeddings of
Planar Graphs}. arXiv:2006.14559 (2020)

\bibitem{cck}D. Chelkak, D. Cimasoni, and A. Kassel, \emph{Revisiting the combinatorics of the 2D Ising
model}, Ann. Inst. Henri Poincar\'e D, 4(3):309–385 (2017)

\bibitem{chelkak-duminil-copin-hongler-kemppainen-smirnov}D. Chelkak,
H. Duminil-Copin, C. Hongler, A. Kemppainen, S. Smirnov, \emph{Convergence
of Ising Interfaces to Schramm's SLE Curves}, C. R. Math. Acad. Sci.
Paris, 352(2):157-161 (2014)

\bibitem{chelkak-hongler}D. Chelkak, C. Hongler, K. Izyurov, \emph{Conformal
invariance of spin correlations in the planar Ising model}, Ann. Math.,
181(3):1087--1138 (2015)

\bibitem{chi21}D. Chelkak, C. Hongler, K. Izyurov, \emph{Correlations
of Primary Fields in the Critical Ising Model}, arXiv: 2103.10263
(2021)

\bibitem{zigzag}D. Chelkak, C. Hongler, R. Mahfouf. \emph{Magnetization in the zig-zag layered
Ising model and orthogonal polynomials}, arXiv:1904.09168 (2019)

\bibitem{chelkak-izyurov}D. Chelkak, K. Izyurov, \emph{Holomorphic
Spinor Observables in the Critical Ising Model. }Comm. Math. Phys.
322(2):302-303 (2013)

\bibitem{cim21}D. Chelkak, K. Izyurov, R. Mahfouf, \emph{Universality
of spin correlations in the Ising model on isoradial graphs}, arXiv:2104.12858
(2021)

\bibitem{cpw}D. Chelkak, S. Park, Y. Wan, in preparation.

\bibitem{chsm2012}D. Chelkak, S. Smirnov, \emph{Universality in the
2D Ising model and conformal invariance of fermionic observables}.
Invent. Math., 189(3):515-580 (2012)

\bibitem{dlmf}F. W. J. Olver, A. B. Olde Daalhuis, D. W. Lozier,
B. I. Schneider, R. F. Boisvert, C. W. Clark, B. R. Miller, and B.
V. Saunders, eds., \emph{NIST Digital Library of Mathematical Functions}.
http://dlmf.nist.gov/, Release 1.0.20 (2018)

\bibitem{massive-fk-square}H. Duminil-Copin, C. Garban, G. Pete, \emph{The near-critical planar FK-Ising model}, Comm. Math. Phys. 326:1-35 (2014)

\bibitem{ising-rsw}H. Duminil-Copin, C. Hongler, P. Nolin, \emph{Connection probabilities and RSW-type
bounds for the two-dimensional FK Ising model}, Comm. Pure Appl. Math. 64(9):1165–
1198 (2011)

\bibitem{rot-fk}H. Duminil-Copin, K.K. Kozlowski, D. Krachun, I. Manolescu, M. Oulamara, \emph{Rotational invariance in critical planar lattice models}, arXiv:2012.11672 (2020)

\bibitem{dlm}H. Duminil-Copin, J. Li, I. Manolescu, \emph{Universality
for the random-cluster model on isoradial graphs}. Electron. J. Probab.,
23(96):1-70 (2018)

\bibitem{fokas}A. Fokas, A. Its, A. Kapaev, V. Novokshenov, \emph{Painlev\'e Transcendents: The Riemann-Hilbert Approach}, American Mathematical Society (2006)

\bibitem{free-interface}C. Hongler, K. Kyt\"ol\"a, \emph{Ising interfaces and free boundary conditions}, J. Amer. Math. Soc. 26(4):1107–1189 (2013)

\bibitem{hkz}C. Hongler, K. Kyt\"ol\"a, A. Zahabi, \emph{Discrete
holomorphicity and Ising model operator formalism}, Contemporary Mathematics,
644(X):79-115 (2015)

\bibitem{hosm2013}C. Hongler, S. Smirnov, \emph{The energy density
in the critical planar Ising model}, Acta Math., 211(2):191-225 (2013)

\bibitem{ising}E. Ising, \emph{Beitrag zur Theorie des Ferromagnetismus}.
Zeitschrift für Physik, 31:253-258 (1925)

\bibitem{kadanoff-ceva}L. Kadanoff, H. Ceva, \emph{Determination
of an operator algebra for the two-dimensional Ising model}. Phys.
Rev. B (3), 3:3918-3939 (1971)

\bibitem{kako80}L. Kadanoff, M. Kohmoto, \emph{SMJ’s analysis of
Ising model correlation functions}, Ann. Phys. 126:371-398 (1980)

\bibitem{kaufman}B. Kaufman, \emph{Crystal statistics. II. Partition
function evaluated by spinor analysis}. Phys. Rev., II. Ser., 76:1232-1243
(1949)

\bibitem{kaufman-onsager-i}B. Kaufman, L. Onsager, \emph{Crystal
statistics. III. Short-range order in a binary Ising lattice}. Phys.
Rev., II. Ser., 76:1244-1252 (1949)

\bibitem{kramers-wannier}H. A. Kramers and G. H. Wannier. \emph{Statistics
of the two-dimensional ferromagnet. I.} Phys. Rev. (2), 60:252-262
(1941)

\bibitem{lenz}W. Lenz, \emph{Beitrag zum Verständnis der magnetischen
Eigenschaften in festen Körpern}. Phys. Zeitschr., 21:613-615 (1920)

\bibitem{mctw}B. M. McCoy, C. A. Tracy and T. T. Wu, \emph{Painlev\'e functions of the third kind}, J. Math.
Phys. 18: 1058–1092 (1977)

\bibitem{mccoy-wu}B. McCoy, T. Wu. \emph{The two-dimensional Ising model}, Dover, second edition (2014)

\bibitem{mercat}C. Mercat, \emph{Discrete Riemann surfaces and the
Ising model}. Comm. Math. Phys. 218:177-216 (2001)

\bibitem{miwa-jimbo}M. Jimbo, T. Miwa, \emph{Studies on holonomic
quantum fields, XVII}. Proc. Japan Acad. Ser. A Math. Sci. 56(9):
405-410 (1980)

\bibitem{onsager}L. Onsager, \emph{Crystal statistics. I.} \emph{A
two-dimensional model with an order-disorder transition}, Phys Rev
(2), 65:117-149 (1944)

\bibitem{palmer}J. Palmer, \emph{Planar Ising correlations}, Birkhäuser
(2007)

\bibitem{patr81}J. Palmer, C. A. Tracy, \emph{Two-Dimensional Ising Correlations: Convergence 
of the Scaling Limit}. Adv. in Appl. Math. 2:329-388 (1981)

\bibitem{patr}J. Palmer, C. A. Tracy, \emph{Two-Dimensional Ising
Correlations: The SMJ Analysis}. Adv. in Appl. Math. 4:46-102 (1983)

\bibitem{par19}S. Park, \emph{Massive Scaling Limit of the Ising
Model: Subcritical Analysis and Isomonodromy}, Ph.D. Thesis, arXiv:1811.06636v2 (2019)

\bibitem{par21}S. Park, \emph{Convergence of Fermionic Observables
in the Massive Planar FK-Ising Model}, Comm. Math. Phys. 396:1071–1133 (2022)

\bibitem{perk}J. Perk, \emph{Quadratic identities for Ising model
correlations}. Phys. Lett. A 79(1):3-5 (1980)

\bibitem{pinson}H. Pinson, \emph{Rotational invariance of the 2d spin-spin correlation function}, Comm. Math.
Phys., 314(3):807–816 (2012)

\bibitem{sato-miwa-jimbo}M. Sato, T. Miwa, M. Jimbo, \emph{Studies
on holonomic quantum fields, I-IV}. Proc. Japan Acad. Ser. A Math.
Sci., 53(1):6--10, 53(1):147-152, 53(1):153-158, 53(1):183-185 (1977)

\bibitem{sato-miwa-jimbo-ii}M. Sato, T. Miwa, M. Jimbo, \emph{Holonomic quantum fields, I-IV}. Publ. RIMS
Kyoto Univ. 14:223-267 (1978), 15:201-278 (1979), 15:577-629 (1979), 15:871-972 (1979)

\bibitem{smirnov-i}S. Smirnov, \emph{Towards conformal invariance
of 2D lattice models}. Sanz-Solé, Marta (ed.) et al., Proceedings
of the international congress of mathematicians (ICM), Madrid, Spain,
August 22--30, 2006. Volume II: Invited lectures, 1421-1451. Zürich:
European Mathematical Society (EMS) (2006)

\bibitem{smirnov-ii}S. Smirnov, \emph{Conformal invariance in random
cluster models. I. Holomorphic fermions in the Ising model.} Ann.
Math., 172(2):1435-1467 (2007)

\bibitem{vek}I. N. Vekua, \emph{Generalized Analytic Functions}.
Pergamon Press (1962)

\bibitem{wu}T. Wu, \emph{Theory of Toeplitz Determinants and the Spin Correlations of the Two Dimensional Ising Model. I}, Phys. Rev. 149:380-401 (1966)

\bibitem{wmtb}T. Wu, B. McCoy, C. Tracy, E. Barouch, \emph{Spin-spin correlation
functions for the two-dimensional Ising model: Exact theory in the
scaling region}, Phys. Rev. B, 13:316-374 (1976)

\bibitem{yang}C. N. Yang. \emph{The spontaneous magnetization of a two-dimensional Ising model}, Phys. Rev.,
85:808–816 (1952)
\end{thebibliography}
\end{document}